\newtheorem{theorem}{Theorem}
\newtheorem{corollary}[theorem]{Corollary}
\newtheorem{definition}{Definition}
\newtheorem{example}{Example}
\newtheorem{lemma}{Lemma}
\newtheorem{proposition}{Proposition}
\newenvironment{proof}[1][Proof]{\textbf{#1.} }{\ \rule{0.5em}{0.5em}}
\newcommand{\calP}{{\cal P}}
\newcommand{\ep}{\varepsilon}
\newcommand{\dN}{{{\N}}}
\newcommand{\dR}{{{\bf R}}}
\newcommand{\rmd}{{{\mathrm d}}}
\newcommand{\E}{{{\bf E}}}
\newcommand{\prob}{{{\bf P}}}
\newcommand{\calB}{{\cal B}}
\newcommand{\calF}{\mathcal{F}}
\newcommand{\calX}{\mathcal{X}}
\newcommand{\calY}{\mathcal{Y}}
\renewcommand{\(}{\bigl(}
\renewcommand{\)}{\bigr)\vphantom{)}}
\newcommand{\D}{\mathrm{d}}
\newcommand{\ti}{\widetilde}
\newcommand{\One}{{1\hskip-2.5pt{\rm l}}}
\newcommand{\Om}{\Omega}
\newcommand{\om}{\omega}
\newcommand{\al}{\alpha}
\newcommand{\be}{\beta}
\newcommand{\la}{\lambda}
\newcommand{\N}{\mathbb N}
\newcommand{\R}{\mathbb R}
\newcommand{\F}{\mathcal F}
\newcommand{\B}{\mathcal B}
\newcommand{\Ec}{\mathcal E}
\newcommand{\Ninf}{\N\cup\{\infty\}}
\newcommand{\modO}{{\operatorname{mod}\,0}}
\newcommand{\cP}[2]{\mathbb{P}\mskip1.5mu\(\mskip1.5mu#1\mskip1.5mu
 \big|\mskip1.5mu#2\mskip1.5mu\)}
\newcommand{\cE}[2]{\mathbb{E}\mskip1.5mu\(\mskip1.5mu#1\mskip1.5mu
 \big|\mskip1.5mu#2\mskip1.5mu\)}
\newcommand{\sia}{$\sigma$\nobreakdash-algebra}
\newcommand{\sii}{$\sigma$\nobreakdash-ideal}
\newcommand{\valued}[1]{$#1$\nobreakdash-\hspace{0pt}valued}
\newcommand{\measurable}[1]{$#1$\nobreakdash-\hspace{0pt}measurable}
\newcommand{\Null}[1]{$#1$\nobreakdash-\hspace{0pt}null}
\begin{document}

\title{Random Stopping Times in Stopping Problems and Stopping Games%
\thanks{We thank Yuri Kifer for raising the question that led to this paper
and for his comments and David Gilat for the discussions we had on the subject.
The research of Solan was supported by the Israel Science Foundation, Grant \#212/09,
and by the Google Inter-university center for Electronic Markets and
Auctions.
The research of Vieille was supported by the Fondation HEC.}}
\author{Eilon Solan\thanks{School of Mathematical Sciences,
Tel Aviv University, Tel Aviv 69978, Israel. e-mail:
eilons@post.tau.ac.il.},
Boris Tsirelson\thanks{School of Mathematical Sciences,
Tel Aviv University, Tel Aviv 69978, Israel. e-mail:
tsirel@post.tau.ac.il.}, and
Nicolas Vieille\thanks{Economics and Decision Sciences Department, HEC
Paris, and GREG-HEC. e-mail: vieille@hec.fr.}} \maketitle
\date{}

\begin{abstract}
Three notions of random stopping times exist in the literature.
We introduce two concepts of equivalence of random stopping times,
motivated by optimal stopping problems and stopping games respectively.
We prove that these two concepts coincide
and that the three notions of random stopping times are equivalent.
\end{abstract}

\noindent
\textbf{Keywords:}
Stopping problems, stopping games, random stopping times, value, optimal stopping times, equilibrium.

\section{Introduction}

In optimal stopping problems, which have been widely studied in the literature,  a stochastic process is given and the decision maker
has to choose a stopping time so as to maximize the expectation of
the stopped process. Such models were  extended first in Dynkin
(1969) to two-player zero-sum stopping games and later to
multiplayer nonzero-sum stopping games. In these games, there is
a finite set $J$ of players and there are as many $\dR^J$\nobreakdash-valued
(payoff) stochastic processes as nonempty subsets of the player
set. Each player chooses a stopping time, and the game terminates
at the minimum of these stopping times, that is, as soon as at least one
player chooses to stop. The payoff to each player  depends on the
identity of the player(s) who first stopped.

Initially such models were studied from a theoretical perspective, see, e.g., Dynkin (1969), Neveu (1975), Bismuth (1977),  Kifer (1971), Hamad\`ene and Lepeltier (2000), Touzi and Vieille (2000), Rosenberg, Solan, and Vieille (2001), and
Shmaya and Solan (2004).
Recently they were  proven useful in stochastic finance and in the pricing of exotic contingent claims, see, e.g.,
Cvitanic, Karatzas and Soner (1998), Kifer (2000), Chalasani and Jha (2001), Kyprianou (2004),
and Hamad\`ene and Zhang (2010)  (see also McConnell and Schwartz (1986)).

In some of these papers, existence of the value and of optimal stopping times (or Nash equilibria for nonzero-sum games) is established under the assumption that some payoff processes are a.s. less than or equal to other payoff processes. To dispense with such assumptions, notions of relaxed, or random, stopping times have been introduced.
The literature uses three such notions:
randomized stopping times (defined in Chalasani and Jha (2001)),
behavior stopping times (defined in Yasuda (1985)),
and mixed stopping times (defined in Aumann (1964)).

In this paper we define a natural concept of equivalence between
random stopping times (in stopping problems)
and we show that the three notions are equivalent.
Next we define the concept of equivalence between random stopping times in stopping games
and we prove that this concept coincides with the concept of equivalence between random stopping times in stopping problems.

The paper is organized as follows.
In Section \ref{RIRV} we study a filtration-free setup; we consider integer-valued random variables to which
we add an external source of randomness
and we define the concept of detailed distribution.
In Section \ref{model} we define the three types of random stopping times
and the concept of equivalence between random stopping times,
and we state one of the two main results of the paper,
namely,
the equivalence between the three types of random stopping times.
This result is proven in Section \ref{Proof}.
In Section \ref{sec:problems} we relate the concept of equivalence between stopping times
to stopping problems.
Finally in Section \ref{games} we study stopping games,
define the concept of game-equivalence between stopping times,
and prove the second main result of the paper,
namely, that the concepts of equivalence and game-equivalence are one and the same.

\section{Randomizing Integer-Valued Random Variables}\label{RIRV}

Throughout the paper we denote by $I := [0,1]$ the unit interval,
by $\calB$ its Borel $\sigma$-algebra, and by $\lambda$ the Lebesgue
measure over $(I,\calB)$. When $(X,\calX,P_1)$ and $(Y,\calY,P_2)$ are two probability spaces,
the product probability space is denoted by $(X \times Y, \calX \times \calY, P_1 \otimes P_2)$.

Let $(\Om,\F,P)$ be a probability space that is fixed throughout the paper.
Let $ N : \Om\to\N \cup \{\infty\}$
be a random variable.
Treating $ \om \in \Om $ as an \valued{\Om} random variable, the joint
distribution $ \prob_N $ of $\om$ and $N$ is the probability measure
on $ \Om \times (\Ninf) $ defined by
\[
\prob_N \( A \times \{n\} \) = P \{ \om\in A : N(\om)=n \} \, , \ \ \ \forall A \in \F, n \in \Ninf.
\]
The marginal distribution of $\om$, in other words, the projection
of $ \prob_N $ to $ \Om $, is $ P $:
\[
\prob_N \( A \times \N \) = P(A) \, , \ \ \ \forall A \in \F,
\]
the projection of $ \prob_N $ to $ \N \cup \{\infty\}$ is the distribution of $ N
$,
\[
\prob_N \( \Om \times \{n\} \) = P ( N=n ) \, , \ \ \ \forall n \in \N \cup \{\infty\},
\]
and the conditional distribution of $ N $ given $ \om $ degenerates
into the atom at $ N(\om) $.

An additional randomness, external to $ \Om $, may be described by
another probability space $(\ti\Om,\ti\F,\ti P)$ and a
measure-preserving map $ \al : \ti\Om \to \Om $.
Treating $ \om = \al(\ti\om) $ as a random variable on $(\ti\Om,\ti\F,\ti P)$,
for every
random variable $ \ti N : \ti\Om \to \N \cup \{\infty\} $,
the joint distribution $ \prob_{\ti N} $ of $\om$ and $\ti N$ is the
probability measure on $ \Om \times (\Ninf) $ defined by
\[
\prob_{\ti N} \( A \times \{n\} \) = \prob_{\ti N,n}(A) = \ti P \left(\{ \ti\om\in\ti\Om
:
  \al(\ti\om)\in A, \, \ti N(\ti\om)=n \}\right) \, ,\ \ \ \forall A \in \F, n \in \N \cup \{\infty\}.
\]
We call $ \prob_{\ti N} $ the \emph{detailed distribution} of $ \ti N $.
The projection of $ \prob_{\ti N} $ to $ \Om $ is $ P $ and the
projection of $ \prob_{\ti N} $ to $ \N \cup \{\infty\}$ is the
distribution of $ \ti N $.
The conditional distribution of $ \ti N $ given $ \om $
need not be degenerate. We have
\[
P(A) = \prob_{\ti N}(A\times(\Ninf) = \sum_{n \in \Ninf} \prob_{\ti N} \( A \times \{n\} \) =
\sum_{n \in \Ninf} \prob_{\ti N.n}(A) \, , \ \ \ \forall A \in \F,
\]
that is, $ P = \sum_{n \in \Ninf} \prob_{\ti N,n} $. The Radon--Nikodym theorem gives
densities
\[
\rho_n = \frac{ \D \prob_{\ti N,n} }{ \D P } \, ; \quad \prob_{\ti N,n} (A)
= \int_A \rho_n \, \D P \, ; \quad \sum_{n \in \Ninf} \rho_n = 1 \text{ a.s.}
\]
The functions $(\rho_n)_{n \in \Ninf}$ are unique a.s. The conditional
distribution of $ \ti N $ given $ \om $,
\[
\ti P( \ti N=n \mid \om ) = \rho_n (\om) \, ,
\]
is also defined a.s. In the nonrandomized case, $ \rho_n(\om) =
\One_{\{n\}} \(N(\om)\) $ a.s.

\section{Random Stopping Times}
\label{model}

In this section we present the three concepts of random stopping times existing in the literature,
namely, randomized stopping times, behavior stopping times, and
mixed stopping times.
We then define a notion of equivalence between stopping times.

Let $(\calF_n)_{n \in \dN}$ be a filtration in discrete time defined over $(\Omega,\F,P)$.
We assume w.l.o.g.~that $\calF= \calF_\infty := \sigma(\calF_n, n \in \dN)$.

A \emph{stopping time} is a function $\sigma : \Omega \to \dN \cup \{\infty\}$
that satisfies $\{\omega \in \Omega\colon \sigma(\omega) = n\} \in \calF_n$
for every $n \in \dN \cup \{\infty\}$.
When $\sigma(\omega) = \infty$ stopping does not occur in finite time.
To simplify the writing we also refer to this event as ``stopping occurs at time $\infty$.''

\subsection{Randomized Stopping Times}
\label{sec:randomized}

Chalasani and Jha (2001) defined the following concept of randomized stopping time.

\begin{definition}\label{rst}
A \emph{randomized stopping time} is a nonnegative adapted real-valued process $\rho = (\rho_n)_{n \in \dN \cup \{\infty\}}$
that satisfies $\sum_{n \in \dN \cup \{\infty\}}\rho_n(\omega) =1$ for every $\omega \in \Om$.
\end{definition}

The interpretation of a randomized stopping time is that when $\omega$ is the true state of the world,
the probability that the player stops at time $n \in \dN \cup \{\infty\}$ is $\rho_n(\omega)$.
A randomized stopping time $\rho = (\rho_n)_{n \in \N \cup \{\infty\}}$
can be presented as a randomized integer-valued random variable as follows.
Set $(\ti\Omega,\ti\F,\ti P) = (I \times \Om, \calB \times \F, \lambda \otimes P)$,
$\alpha(r,\omega) = \om$ for every $(r,\omega) \in\ti\Om$, and
\begin{equation}
\label{equ:tau:rho}
\ti N_\rho(r,\omega) = \min\left\{n \in \N \colon \sum_{j=1}^n \rho_j(\omega) \geq r\right\},
\end{equation}
where the minimum of an empty set is $\infty$.
The detailed distribution $P_\rho$ of the randomized stopping time $\rho = (\rho_n)_{n \in \dN \cup \{\infty\}}$ is given by
\[ \prob_\rho(A \times \{n\}) = \ti P_{\ti N_\rho}(A \times \{n\})
= \E_P[\One_A \rho_n], \ \ \ \forall A \in \calF, \forall n \in \dN \cup \{\infty\}. \]

\subsection{Behavior Stopping Times}
\label{sec:behavior}

Yasuda (1985) and Rosenberg, Solan, and Vieille (2001) provided the
following definition of a random stopping time.
We call it \emph{behavior stopping time} because of the analogy of this concept to
behavior strategies in game theory (see, e.g., Maschler, Solan, and Zamir, 2013).

\begin{definition}\label{bst}
A \emph{behavior stopping time} is an adapted $[0,1]$-valued process
$\beta = (\beta_n)_{n \in \dN}$.
\end{definition}

The interpretation of a behavior stopping time is that when the
true state of the world is $\omega$, at time $n \in \dN$ the process stops
with probability $\beta_n(\omega)$, conditional on stopping
occurring after time $n-1$.
With probability $\prod_{n\in
\dN}(1-\beta_n)$ the process never stops.
A behavior stopping time $\beta = (\beta_n)_{n \in \dN}$
can be presented as a randomized integer-valued random variable as follows.
Set $(\ti\Omega,\ti\F,\ti P) = (I^\N \times \Om, \calB^\N \times \F, \lambda^\N \otimes P)$,
$\alpha((r_n)_{n \in \N},\omega) = \om$ for every $((r_n)_{n \in \N},\omega) \in\ti\Om$, and
\begin{equation}
\label{equ:tau:beta}
\ti N_\beta((r_n)_{n \in \N},\omega) = \min\left\{n \in \N \colon r_n \leq \beta_n(\omega)\right\}.
\end{equation}
The detailed distribution $\prob_\beta$ of a behavior stopping time $\beta = (\beta_n)_{n \in \dN}$ is
\[ \prob_\beta(A \times \{n\}) = \ti P_{\ti N_\beta}(A \times \{n\}) =
\left\{
\begin{array}{lll}
\E_P\left[ \One_A \left(\prod_{j < n} (1-\beta_j)\right) \beta_n\right], &
\ \ \ & \forall A \in \calF, \forall n \in \dN,\\
\E_P\left[ \One_A \left(\prod_{j \in \dN} (1-\beta_j)\right) \right], &
& \forall A \in \calF, n=\infty.
\end{array}
\right.
\]

\subsection{Mixed Stopping Times}
\label{sec:mixed}

Following Aumann (1964) we define the concept of a mixed stopping
time as follows (see also Touzi and Vieille (2002) and Laraki and Solan (2005)
for an analog concept in continuous-time problems).

\begin{definition}\label{mst}
A \emph{mixed stopping time} is a \measurable{(\calB \times \calF)}
function $\mu : I \times \Omega \to \dN \cup \{\infty\}$ such that for
every $r \in I$, the function $\mu(r,\cdot)$ is a stopping time.
\end{definition}

The interpretation of a mixed stopping time is that $r$ is chosen according to the uniform distribution at the outset,
and the stopping time $\mu(r,\cdot)$ is used.
Aumann's formulation allows us to define a random choice of a stopping time
without imposing the structure of a probability space over the set of stopping times.

A mixed stopping time $\mu$
can be presented as a randomized integer-valued random variable as follows.
Set $(\ti\Omega,\ti\F,\ti P) = (I \times \Om, \calB \times \F, \lambda \otimes P)$,
$\alpha(r,\omega) = \om$ for every $(r,\omega) \in\ti\Om$, and
\begin{equation}
\label{equ:tau:mu}
\ti N_\mu(r,\omega) = \mu(r,\omega).
\end{equation}
The detailed distribution $\prob_\mu$ of a mixed stopping time $\mu$ is
\begin{eqnarray}
\nonumber
\prob_\mu(A \times \{n\}) &=&  \ti P_{\ti N_\mu}(A \times \{n\})\\
\nonumber
&=& (\la\times P) \left(\{ (r,\om) : \om \in A, \, \ti N_\mu(r,\om)=n
\}\right)\\
\label{equ101}
&=&
\int_A P(\D\om) \int_0^1 \D r \,\One_{\{n\}} \( \mu(r,\om) \)\\
&=&
\int_0^1 P(\{\omega \in A \colon \mu(r,\omega) = n\}) \rmd r,
\ \ \ \forall A \in \calF, \forall n \in \dN \cup \{\infty\}.
\nonumber
\end{eqnarray}

\subsection{Equivalence between Random Stopping Times}

Below we will use the symbol $\eta$ to refer to a random stopping time that
can be either randomized, mixed, or behavior.


\begin{definition}
Two random stopping times $\eta$ and $\eta'$ are \emph{equivalent} if they have the same detailed distribution:
$\prob_{\eta} = \prob_{\eta'}$.
\end{definition}
This definition is the analog to stopping problems of the definition of equivalent strategies in extensive-form games
(see, e.g., Kuhn (1957) or Maschler, Solan, and Zamir (2013)).

We now define the concept of a stopping measure, which will play an important role in the equivalence between
the various types of stopping times.
\begin{definition}\label{sm}
A \emph{stopping measure} is a probability measure $ \nu $ on $ \Om
\times ( \Ninf ) $ whose projection to $\Om$ is $P$,
such that the corresponding densities $ \rho_n $ defined for $
n \in \Ninf $ by $ \nu \( A \times \{n\} \) = \int_A
\rho_n \, \D P $ for every $A \in \F$, satisfy the condition
\[
\rho_n \text{ is equal a.s.\ to some \measurable{\F_n} function}
\]
for all $ n \in \N $ (and therefore also for $n=\infty$).
\end{definition}

Our first main result is an equivalence theorem between the concept of
stopping measures and the three types of random stopping times.
This result implies in particular that a random stopping time of each of the
three types (randomized, behavior, or mixed) has an equivalent
stopping time of each of the other types.

\begin{theorem}
\label{theorem:1}
The following four conditions on a probability measure $ \nu $ on $
\Om \times (\Ninf) $ are equivalent:

\begin{enumerate}
\item[(a)] $\nu$ is a stopping measure.
\item[(b)] $\nu$ is the detailed distribution of some (at least one)
randomized stopping time;
\item[(c)] $\nu$ is the detailed distribution of some (at least one) behavior
stopping time;
\item[(d)] $\nu$ is the detailed distribution of some (at least one) mixed
stopping time.
\end{enumerate}
\end{theorem}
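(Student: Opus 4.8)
The plan is to prove the theorem by establishing a cycle (or star) of implications among the four conditions. The natural strategy is to show that each of (b), (c), (d) implies (a) directly from the explicit formulas for the detailed distributions already computed in the excerpt, and then to prove the three reverse implications (a)$\Rightarrow$(b), (a)$\Rightarrow$(c), (a)$\Rightarrow$(d) by constructing, from an abstract stopping measure $\nu$, a concrete random stopping time of each type whose detailed distribution is $\nu$.

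First I would dispatch the easy direction. For each of (b), (c), (d), the formula for $\prob_\eta(A\times\{n\})$ exhibits $\rho_n$ explicitly: for a randomized stopping time $\rho_n$ is the given adapted process, so it is $\calF_n$-measurable by hypothesis; for a behavior stopping time $\beta$, the density is $\rho_n = \left(\prod_{j<n}(1-\beta_j)\right)\beta_n$, which is a product of $\calF_j$-measurable factors with $j\le n$ and is therefore $\calF_n$-measurable; for a mixed stopping time $\mu$, the density is $\rho_n(\om)=\int_0^1 \One_{\{n\}}(\mu(r,\om))\,\rmd r$, and since $\{\mu(r,\cdot)=n\}\in\calF_n$ for each fixed $r$, Fubini shows $\rho_n$ is $\calF_n$-measurable. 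In all three cases $\sum_n\rho_n=1$ and the projection to $\Om$ is $P$, so $\nu=\prob_\eta$ is a stopping measure. This gives (b)$\Rightarrow$(a), (c)$\Rightarrow$(a), and (d)$\Rightarrow$(a) with essentially no work beyond reading off the already-derived densities.

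For the converse directions, start from a stopping measure $\nu$ with densities $(\rho_n)_{n\in\Ninf}$, each equal a.s.\ to an $\calF_n$-measurable function; replace each $\rho_n$ by such a version. The implication (a)$\Rightarrow$(b) is then immediate: the process $(\rho_n)_{n\in\Ninf}$ is itself a randomized stopping time (nonnegative, adapted, summing to $1$ a.s., after adjusting on a null set), and its detailed distribution is $\nu$ by the randomized-case formula. For (a)$\Rightarrow$(c) I would define the behavior process by the standard renormalization
\[
\beta_n = \frac{\rho_n}{1-\sum_{j<n}\rho_j} = \frac{\rho_n}{\sum_{j\ge n}\rho_j},
\]
setting $\beta_n$ arbitrarily (say $0$) on the event where the denominator vanishes; each $\beta_n$ is $\calF_n$-measurable and $[0,1]$-valued, and a telescoping computation recovers $\left(\prod_{j<n}(1-\beta_j)\right)\beta_n=\rho_n$, so $\prob_\beta=\nu$. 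For (a)$\Rightarrow$(d) I would build a mixed stopping time $\mu(r,\om)$ by slicing the unit interval according to the partial sums: informally, let $\mu(r,\om)=n$ when $r$ falls in the interval of length $\rho_n(\om)$, using $\calF_n$-measurability to ensure that for each fixed $r$ the map $\om\mapsto\mu(r,\om)$ is a genuine stopping time and that the joint map is $(\calB\times\calF)$-measurable.

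The main obstacle is the construction in (a)$\Rightarrow$(d). Unlike the randomized and behavior cases, where the adaptedness of the densities transfers transparently, a mixed stopping time requires that the single threshold $r$ simultaneously yield a stopping time for \emph{every} $\om$, and that $\mu$ be jointly measurable. The naive cumulative-distribution construction $\mu(r,\om)=\min\{n:\sum_{j\le n}\rho_j(\om)\ge r\}$ gives the right detailed distribution, but one must check carefully that $\{\om:\mu(r,\om)=n\}\in\calF_n$ for each fixed $r$ — this hinges precisely on the $\calF_n$-measurability of the partial sums $\sum_{j\le n}\rho_j$ built into the definition of a stopping measure — and that joint measurability holds, for which the countability of the index set $\Ninf$ and the measurability of each $\rho_n$ suffice. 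I expect that verifying these measurability properties, rather than the distributional identity itself, will be where the real care is needed.
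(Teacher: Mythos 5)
There is a genuine gap, and it sits exactly where you declare the argument to be ``essentially no work'': the implication (d)$\Rightarrow$(a). You claim that since $\{\mu(r,\cdot)=n\}\in\calF_n$ for each fixed $r$, ``Fubini shows'' that $\rho_n(\om)=\int_0^1\One_{\{n\}}\(\mu(r,\om)\)\,\rmd r$ is \measurable{\calF_n}. Fubini does not give this. Fubini applies to the function $(r,\om)\mapsto\One_{\{n\}}\(\mu(r,\om)\)$ as a \measurable{(\calB\times\calF)} function and yields only \measurability{\calF} of the integral. To get \measurability{\calF_n} you would need \emph{joint} \measurability{(\calB\times\calF_n)}, and the definition of a mixed stopping time only guarantees that every $r$-section is \measurable{\calF_n}; separate measurability of sections does not imply joint measurability with respect to the smaller product \sia. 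The paper devotes most of Section \ref{Proof} to precisely this point: Proposition \ref{prop1} and Corollary \ref{cor1} show (via an $L_2$ vector-valued isometry, or alternatively the strong law of large numbers) that $f$ is \measurable{(\overline{\F_1\times\Ec_2})} and hence the averaged function is \measurable{\overline\Ec_2} --- i.e.\ only equal a.s.\ to an \measurable{\Ec_2} function --- and then exhibits a counterexample (the fat Cantor set against the \sia\ of meagre and comeagre sets) in which the averaged function genuinely fails to be \measurable{\Ec_2}. So the conclusion you assert is false as stated; what is true, and what the definition of a stopping measure is carefully worded to require, is the weaker a.s.\ statement, and establishing it is the real content of the theorem. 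You have misidentified the hard direction: (a)$\Rightarrow$(d) is the routine one (your cumulative-sum construction is exactly the paper's, and the measurability checks there are straightforward), whereas (d)$\Rightarrow$(a) is where the work lies.

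The remaining parts of your proposal match the paper: (b)$\Rightarrow$(a) and (c)$\Rightarrow$(a) by reading off the densities, (a)$\Rightarrow$(b) by taking adapted versions, (a)$\Rightarrow$(c) by the hazard-rate renormalization with the $\frac00=0$ convention, and (a)$\Rightarrow$(d) by the inverse-CDF construction. One small further caution on (a)$\Rightarrow$(b): Definition \ref{rst} requires $\sum_n\rho_n(\om)=1$ for \emph{every} $\om$ and nonnegativity everywhere, so ``adjusting on a null set'' must be done so as to preserve adaptedness simultaneously for all $n$; the paper does this with an explicit recursive truncation $\widehat{\widehat\rho}_n=\max\{0,\min\{\widehat\rho_n,1-\sum_{k<n}\widehat{\widehat\rho}_k\}\}$, which you should spell out.
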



\section{Proof of Theorem \ref{theorem:1}}\label{Proof}

For the proof we adopt the ``$\modO$'' approach.
As is well known, a random variable is defined either as a
measurable function on $\Om$ or an equivalence class of such
functions, depending on the context; here ``equivalence'' means ``equality almost surely''. For
example, the conditional expectation $ \cE X \F $ is generally an
equivalence class. Likewise, an event is defined either as a
measurable subset of $\Om$ or an equivalence class of such sets. In
many cases it is possible and convenient to work in terms of
equivalence classes only.
This approach is known as the ``$\modO$'' approach.

The $\modO$ approach to \sia s (assumed to contain only
\measurable{\F} sets) replaces each set of a given \sia\ with the
corresponding equivalence class. Two \sia s are treated as
equivalent if they lead to the same set of equivalence classes. This
holds if and only if they lead to the same operator of conditional
expectation. The largest \sia\ $\overline\Ec$ equivalent to $\Ec$ is
generated by $ \Ec $ and all $P$-null sets.
The $\sigma$-algebra $\overline\Ec$ is called the completion
of $ \Ec $. The notion ``equivalence class of \sia s'' is of little
use; instead the notion of the completion is used.
Every \measurable{\overline\Ec} function is equivalent to some (generally
nonunique) \measurable{\Ec} function.

We now turn to the proof of Theorem \ref{theorem:1}.
When $\rho = (\rho_n)_{n \in \Ninf}$ is a randomized stopping time,
the density of $\prob_{\ti N_\rho,n}$ is $\rho_n$, for every $n \in \Ninf$.
It follows that Condition (b) implies Condition (a).
We now prove the converse implication.

\begin{lemma}\label{lemmaRST2}
Every stopping measure is the detailed distribution of some (at least
one) randomized stopping time.
\end{lemma}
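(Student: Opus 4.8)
The plan is to realize the given stopping measure $\nu$ as the detailed distribution of a randomized stopping time built directly from its densities $\rho_n$. Recall that the detailed distribution of a randomized stopping time $\tilde\rho = (\tilde\rho_n)_{n \in \Ninf}$ is $\prob_{\tilde\rho}(A \times \{n\}) = \E_P[\One_A \tilde\rho_n] = \int_A \tilde\rho_n \, \D P$. Since a measure on $\Om \times (\Ninf)$ is determined by its values on the sets $A \times \{n\}$ with $A \in \F$ and $n \in \Ninf$, it suffices to produce a genuine randomized stopping time $\tilde\rho$ with $\tilde\rho_n = \rho_n$ $P$-a.s.\ for every $n$. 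The obvious candidate is to use the densities $\rho_n$ themselves, but Definition~\ref{rst} demands a process that is \emph{everywhere} nonnegative, \emph{everywhere} summing to $1$, and adapted, whereas the defining properties of a stopping measure ($\rho_n \ge 0$ and $\sum_n \rho_n = 1$) hold only almost surely. The whole difficulty is to repair the densities on a $P$-null set so as to obtain these pointwise properties \emph{without} destroying adaptedness.

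First I would fix, for each $n \in \N$, a version $\hat\rho_n$ of the density $\rho_n$ that is genuinely $\F_n$-measurable (possible by the stopping-measure hypothesis), and replace it by $\max(\hat\rho_n, 0)$ so that $\hat\rho_n \ge 0$ everywhere while still $\hat\rho_n = \rho_n$ a.s. Because $\sum_{n \in \Ninf}\rho_n = 1$ a.s.\ and $\rho_\infty \ge 0$ a.s., the partial sums $s_n := \sum_{j=1}^n \hat\rho_j$ satisfy $s_n \nearrow s_\infty \le 1$ a.s., where $s_\infty = \sum_{j \in \N}\hat\rho_j$. The key device is then to \emph{cap} the cumulative mass at $1$: set $s_0 := 0$ and define
\[
\tilde\rho_n := \min(s_n, 1) - \min(s_{n-1}, 1) \quad (n \in \N), \qquad \tilde\rho_\infty := 1 - \min(s_\infty, 1).
\]
Each $\tilde\rho_n$ is $\F_n$-measurable, since $s_n$ and $s_{n-1}$ are (finite sums of the $\hat\rho_j$, $j \le n$), and $\tilde\rho_\infty$ is $\F=\F_\infty$-measurable; it is nonnegative everywhere because $t \mapsto \min(t,1)$ is nondecreasing and $s_n \ge s_{n-1}$; and the partial sums telescope to $\sum_{j=1}^n \tilde\rho_j = \min(s_n, 1)$, so that $\sum_{n \in \Ninf}\tilde\rho_n = \min(s_\infty,1) + (1 - \min(s_\infty, 1)) = 1$ pointwise on all of $\Om$. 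Hence $\tilde\rho$ is a bona fide randomized stopping time.

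It then remains to check that $\tilde\rho_n = \rho_n$ a.s.\ for each $n$, which yields $\prob_{\tilde\rho} = \nu$. On the full-measure event $\{s_\infty \le 1\}$ we have $s_n \le 1$ for all $n$, so $\min(s_n, 1) = s_n$ and the cap is inactive: there $\tilde\rho_n = s_n - s_{n-1} = \hat\rho_n = \rho_n$ for $n \in \N$, while $\tilde\rho_\infty = 1 - s_\infty = 1 - \sum_{j \in \N}\rho_j = \rho_\infty$. Integrating against $\One_A$ gives $\int_A \tilde\rho_n \, \D P = \int_A \rho_n \, \D P = \nu(A \times \{n\})$ for all $A \in \F$ and $n \in \Ninf$, i.e.\ $\prob_{\tilde\rho} = \nu$.

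I expect the main obstacle to be precisely the adaptedness constraint in the repair step. The naive fix---redefining the process to a default value (e.g.\ putting all mass at $\infty$) on the null set $B := \{s_\infty > 1\}$---fails because $B$ need not belong to $\F_n$, so $\hat\rho_n \One_{B^c}$ need not be $\F_n$-measurable. The virtue of the capping construction is that at each time $n$ it uses only the $\F_n$-measurable quantity $s_n$, so it automatically respects the filtration; this is the one point where care is genuinely required, the remainder being routine.
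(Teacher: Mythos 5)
Your proposal is correct and follows essentially the same route as the paper: both repair the a.s.-defined densities on a null set by capping the cumulative mass at $1$ in an $\F_n$-adapted way (the paper does this via the recursion $\widehat{\widehat\rho}_n = \max\{0,\min\{\widehat\rho_n,\,1-\sum_{k<n}\widehat{\widehat\rho}_k\}\}$, which, after your preliminary truncation of negatives, produces exactly your telescoped $\min(s_n,1)-\min(s_{n-1},1)$). The key point you identify --- that the repair must use only $\F_n$-measurable information at stage $n$ --- is precisely the point of the paper's construction as well.
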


\begin{proof}
Let $\nu$ be a stopping measure, and let $(\rho_n)_{n \in \Ninf}$ be the corresponding densities.
By the definition of a stopping measure there exists an adapted process $\widehat\rho = ({\widehat\rho}_n)_{n \in \Ninf}$
such that ${\widehat\rho}_n=\rho_n$ a.s.~for every $n \in \Ninf$ and
$\sum_{n \in \Ninf} {\widehat\rho}_n = 1$ a.s.
To ensure that $\widehat\rho$ is a randomized stopping time we need
to modify $(\widehat\rho_n)_{n \in \Ninf}$ on a null set, preserving adaptedness, in such a way that
$(\widehat\rho_n)_{n \in \Ninf}$ be nonnegative and satisfy
$\sum_{n \in \Ninf} \widehat\rho_n = 1$ everywhere.

For each $n \in \dN$ define
\[ \widehat{\widehat\rho}_n :=
\max\left\{0,\min\left\{\widehat\rho_n, 1-\sum_{k=1}^{n-1}\widehat{\widehat\rho}_k\right\}\right\}. \]
Then $\widehat{\widehat\rho}_n$ is nonnegative, $\F_n$-measurable,
 satisfies
$\widehat{\widehat\rho}_n = \widehat\rho_n$ a.s.,
and $\sum_{k=1}^n \widehat{\widehat\rho}_k \leq 1$ for every $n \in N$.
Finally set
\[ \widehat{\widehat\rho}_\infty := 1 - \sum_{k=1}^\infty\widehat{\widehat\rho}_k. \]
It follows that $\rho_n = {\widehat{\widehat \rho}}_n$ a.s.~for every $n \in \Ninf$,
and $\nu = \prob_{\widehat{\widehat \rho}}$.
This concludes the proof.
\end{proof}

\bigskip

Given a behavior stopping time $\be$,
the density of $\prob_{\ti N_\beta,n}$ is $\left(\prod_{j < n} (1-\beta_j)\right) \beta_n$ for $n \in \N$,
and $\prod_{j \in \dN} (1-\beta_j)$ for $n=\infty$.
It follows that Condition (c) implies Condition (a).
We now prove the converse implication.
\begin{lemma}
Every stopping measure is the detailed distribution of some (at least
one) behavior stopping time.
\end{lemma}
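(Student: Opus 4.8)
The plan is to read the behavior stopping time off the densities of $\nu$ by the usual hazard-rate (conditional-stopping-probability) construction. Let $\nu$ be a stopping measure with densities $(\rho_n)_{n \in \Ninf}$. Using the $\modO$ approach I first replace each $\rho_n$ by an $\F_n$-measurable version and arrange, exactly as in the proof of Lemma \ref{lemmaRST2}, that $\rho_n \ge 0$ everywhere and $\sum_{n \in \Ninf}\rho_n = 1$ everywhere. I then introduce the tail (survival) function $S_n := 1 - \sum_{k<n}\rho_k$, which equals $\sum_{k \ge n}\rho_k$ with the sum ranging over $\{k \in \Ninf : k \ge n\}$ and so including the mass $\rho_\infty$. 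Thus $S_1 = 1$, the sequence $(S_n)$ is nonincreasing, $S_{n+1} = S_n - \rho_n$, and $0 \le \rho_n \le S_n \le 1$; moreover $S_n$ depends only on $\rho_1,\dots,\rho_{n-1}$ and is therefore $\F_{n-1}$-measurable, hence adapted.

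Next I define the candidate by $\beta_n := \rho_n/S_n$ on the set $\{S_n > 0\}$ and $\beta_n := 0$ on $\{S_n = 0\}$. Since $0 \le \rho_n \le S_n$, this $\beta_n$ is $[0,1]$-valued, and as a ratio of $\F_n$-measurable functions on the $\F_n$-measurable set $\{S_n > 0\}$ it is $\F_n$-measurable; hence $\beta = (\beta_n)_{n \in \N}$ is a genuine behavior stopping time. The heart of the argument is the telescoping identity $\prod_{j<n}(1-\beta_j) = S_n$. Wherever $S_j > 0$ one has $1-\beta_j = (S_j - \rho_j)/S_j = S_{j+1}/S_j$, so the finite product collapses to $S_n/S_1 = S_n$; on the set where some $S_m$ first vanishes, every $\rho_k$ with $k \ge m$ vanishes too, and the convention $\beta_m = 0$ keeps both sides equal there. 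Combining, $\bigl(\prod_{j<n}(1-\beta_j)\bigr)\beta_n = S_n \cdot \rho_n/S_n = \rho_n$ for every $n \in \N$, which is precisely the density of $\prob_\beta$ at finite times.

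For the mass at infinity I compute $\prod_{j \in \N}(1-\beta_j) = \lim_{N \to \infty}\prod_{j \le N}(1-\beta_j) = \lim_{N \to \infty} S_{N+1}$. Since $\sum_{k \in \N}\rho_k \le 1$ converges, its tail $\sum_{k > N}\rho_k$ tends to $0$, so $\lim_N S_{N+1} = \rho_\infty$, matching the density of $\prob_\beta$ at $n = \infty$. As $\prob_\beta$ and $\nu$ then have the same density against $P$ for every $n \in \Ninf$, they coincide, which proves the lemma.

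The only delicate points are the division by $S_n$ and the passage from almost-sure to everywhere statements. I expect the main obstacle to be a clean treatment of the set $\{S_n = 0\}$: there $\rho_n = 0$, so the value of $\beta_n$ is immaterial for reproducing the density, but one must still make a measurable, everywhere-defined choice (the convention $\beta_n = 0$) and check that the telescoping identity and the infinite-product computation survive on this set. The accompanying $\modO$ cleanup—choosing $\F_n$-measurable representatives and correcting them on a $P$-null set so that $\rho_n \ge 0$, $\sum_n \rho_n = 1$, and hence $0 \le \beta_n \le 1$ hold pointwise—is routine and parallels the argument already used in the proof of Lemma \ref{lemmaRST2}.
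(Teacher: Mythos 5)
Your proposal is correct and follows essentially the same route as the paper: invoke Lemma \ref{lemmaRST2} to get a randomized stopping time with the given detailed distribution, and convert it to a behavior stopping time via the discrete hazard rate $\beta_n = \rho_n/(1-\rho_1-\cdots-\rho_{n-1})$ with the $0/0=0$ convention. The only difference is that you spell out the telescoping verification and the treatment of the set where the survival function vanishes, which the paper leaves as ``immediate to verify.''
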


\begin{proof}
The result holds since a behavior stopping time
$\beta = (\beta_n)_{n \in \N}$ is basically the same as a randomized stopping time $\rho = (\rho_n)_{n \in \Ninf}$;
they differ only in the choice of parameters describing probability
measures on $ \Ninf $: either $\rho_n(\om)=\cP{\ti N=n}\om$ or  $ \beta_n(\om) =
\cP{ \ti N=n }{ \ti N \ge n; \, \om } $ (a discrete-time hazard rate).

Formally, given a stopping measure $\nu$, Lemma~\ref{lemmaRST2} provides a randomized
stopping time $\rho$ such that $ \prob_\rho = \nu $.
Define
\[
\be_n :=
\frac{ \rho_n }{ 1 - \rho_1 - \dots - \rho_{n-1} } \, \ \ \ \forall n \in N,
\]
where, by convention, $\frac00 = 0 $ (any choice of convention will do).
Since the process $(\rho_n)_{n \in \Ninf}$ is adapted, so is the process $(\be_n)_{n \in \Ninf}$.
It is immediate to verify that the detailed distributions of $\rho$ and $\beta$ coincide.
\end{proof}

\bigskip

We next prove that Condition (a) implies Condition (d).

\begin{lemma}\label{lemmaRST4}
Every stopping measure is the detailed distribution of some (at least
one) mixed stopping time.
\end{lemma}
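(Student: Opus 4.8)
The plan is to reuse the very randomizing device that was already attached to a randomized stopping time, and observe that it is, for each fixed parameter, an honest stopping time. By Lemma~\ref{lemmaRST2}, the stopping measure $\nu$ is the detailed distribution $\prob_\rho$ of some randomized stopping time $\rho = (\rho_n)_{n \in \Ninf}$, which (by the construction in that lemma) we may take to be genuinely, not merely a.s., nonnegative, adapted, and summing to $1$ everywhere. First I would simply set
\[
\mu(r,\om) := \ti N_\rho(r,\om) = \min\left\{ n \in \N : \sum_{j=1}^n \rho_j(\om) \ge r \right\},
\]
the function already appearing in \eqref{equ:tau:rho}, and then verify that $\mu$ is a mixed stopping time in the sense of Definition~\ref{mst} and that its detailed distribution is $\nu$.

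Next I would check the two requirements of Definition~\ref{mst}. Write $S_n := \sum_{j=1}^n \rho_j$ for the partial sums, with $S_0 := 0$; since $\rho$ is adapted, each $S_n$ is $\F_n$-measurable, and $S_n$ is nondecreasing in $n$ because the $\rho_j$ are nonnegative. The monotonicity gives the clean identity $\{\om : \mu(r,\om) \le n\} = \{\om : S_n(\om) \ge r\}$ for every $r$ and $n$. For joint \measurability{(\calB \times \calF)}, the set $\{(r,\om) : \mu(r,\om) \le n\} = \{(r,\om) : S_n(\om) \ge r\}$ lies in $\calB \times \F$ because $(r,\om) \mapsto S_n(\om) - r$ is $(\calB \times \F)$-measurable; hence each level set $\{\mu = n\} = \{\mu \le n\} \setminus \{\mu \le n-1\}$ and $\{\mu = \infty\}$ are in $\calB \times \F$ as well. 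For the stopping-time property, fix $r \in I$: then $\{\om : \mu(r,\om) = n\} = \{S_n \ge r\} \setminus \{S_{n-1} \ge r\} \in \F_n$ since $S_n$ is $\F_n$-measurable and $S_{n-1}$ is $\F_{n-1} \subseteq \F_n$-measurable, so $\mu(r,\cdot)$ is a stopping time.

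Finally, the detailed distribution of $\mu$ coincides with that of $\rho$: by construction $\ti N_\mu = \ti N_\rho$ as functions on the common probability space $(I \times \Om, \calB \times \F, \la \otimes P)$ with the same projection $\al(r,\om) = \om$, so \eqref{equ101} and the displayed formula for $\prob_\rho$ give $\prob_\mu = \prob_\rho = \nu$. I do not expect a genuine obstacle: the entire content is the observation that the map $\ti N_\rho$ used to compute the detailed distribution of a randomized stopping time is, for each frozen $r$, already an honest stopping time, precisely because the partial sums $S_n$ inherit $\F_n$-measurability from the adaptedness of $\rho$. The only point requiring care is to start from the everywhere-defined, everywhere-summing-to-one version of $\rho$ supplied by Lemma~\ref{lemmaRST2}, so that $\mu(r,\cdot)$ is a stopping time for \emph{every} $r \in I$ rather than merely for almost every $r$.
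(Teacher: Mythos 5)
Your proof is correct and follows essentially the same route as the paper's: invoke Lemma~\ref{lemmaRST2} to realize $\nu$ as $\prob_\rho$ for a randomized stopping time $\rho$, define $\mu(r,\om)=\min\{n\in\N : \sum_{j=1}^n\rho_j(\om)\ge r\}$, and check adaptedness of the partial sums gives the stopping-time property and that $\prob_\mu=\prob_\rho=\nu$. Your explicit verification of joint \measurability{(\calB\times\F)} and your remark about needing the everywhere-defined version of $\rho$ are slightly more careful than the paper's write-up but add nothing essentially different.
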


\begin{proof}
Given a stopping measure $\nu$, Lemma~\ref{lemmaRST2} provides a randomized
stopping time $\rho$ such that $ \prob_\rho = \nu $. We construct
$\mu$ as follows:
\[
\mu(r,\om) =
\min\left\{ n \in \N \colon r \leq \sum_{j=1}^n \rho_j(\omega) \right\}. \]
Note that the set $ \{ \om : \mu(r,\om)=n \} =
\left\{ \om : \sum_{j=1}^n \rho_j(\omega) \ge r \right\} \cap \left\{ \om :
\sum_{j=1}^{n-1} \rho_j(\omega) < r \right\} $ belongs to $ \F_n $ since $
\rho $ is adapted.
It follows that $\mu$ is a mixed stopping time.
By (\ref{equ101}),
\begin{eqnarray}
\nonumber
\prob_\mu(A \times \{n\}) &=&
\int_A P(\D\om) \int_0^1 \D r \,\One_{\{n\}} \( \mu(r,\om) \)\\
&=& \int_\Omega \One_A \rho_n \D P,
\ \ \ \forall A \in \calF, \forall n \in \dN \cup \{\infty\}
\end{eqnarray}
so that $\prob_\mu = \prob_\rho = \nu$, as desired.
\end{proof}

\bigskip

It remains to prove that Condition (d) implies Condition (a).
To this end we study a more general question
that has its own interest.
Consider the product of two probability spaces
\[
(\Om,\F,P) = (\Om_1 \times \Om_2, \F_1 \times \F_2, P_1 \otimes P_2) \, ,
\]
a function $ f : \Om \to \R $, and its sections $ f_{\om_1} : \Om_2 \to
\R $ defined by $ f_{\om_1}(\om_2) = f(\om_1,\om_2) $ and $ f^{\om_2} : \Om_1 \to
\R $ defined by $ f^{\om_2}(\om_1) = f(\om_1,\om_2) $. If $ f $ is
\measurable{\F} then all its sections are measurable (w.r.t.\ $\F_2$
and $\F_1$ respectively). That is, joint measurability implies
separate measurability. The converse is generally wrong: separate
measurability does not imply joint measurability.

Assume that a sub-\sia\ $ \Ec_2 \subset \F_2 $ is given, $f$ is
\measurable{\F}, and $f_{\om_1}$ is \measurable{\Ec_2} for all $ \om_1
\in \Om_1 $. Does it follow that $f$ is
\measurable{{(\F_1\times\Ec_2)}}?

As we show below, in this case $f$ is
\measurable{(\overline{\F_1\times\Ec_2})}; this is,
it is measurable w.r.t.~the \sia\ generated
by $\F_1\times\Ec_2$ and all \Null{P} sets.
However, $f$ is not necessarily
\measurable{(\F_1\times\Ec_2)}. Consequently, assuming integrability of
$f$, the averaged function $ g : \Om_2 \to \R $ defined by
\[
g(\om_2) := \int f^{\om_2} \, \D P_1 = \int f(\om_1,\om_2) \,
P(\D\om_1) \,
\]
is \measurable{\overline\Ec_2} but not necessarily \measurable{\Ec_2}.

\begin{proposition}
\label{prop1}
Under the above notations the function $f$ is \measurable{(\overline{\F_1\times\Ec_2})}.
\end{proposition}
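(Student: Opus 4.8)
The plan is to deduce the statement from a single fact about conditional expectations: that the conditional expectation on the product $\sigma$\nobreakdash-algebra $\F_1\times\Ec_2$ may be computed fibrewise, and then to exploit the hypothesis on the sections of $f$.

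First I would remove any integrability concern. Composing $f$ with a Borel isomorphism $\theta:\R\to(0,1)$ produces a bounded function $\theta\circ f$, still $\measurable\F$, whose sections $\theta\circ f_{\om_1}$ remain $\measurable{\Ec_2}$; and since $\theta$ is a Borel isomorphism, $f$ is $\measurable{(\overline{\F_1\times\Ec_2})}$ if and only if $\theta\circ f$ is. So I may assume $f$ is bounded. Then the conditional expectation $g:=\cE{f}{\F_1\times\Ec_2}$ is well defined and, by the very definition of conditional expectation, is $\measurable{(\overline{\F_1\times\Ec_2})}$. Hence it suffices to prove that $f=g$ a.s.

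The crux is the fibrewise representation: for $P_1$\nobreakdash-a.e.\ $\om_1$ one has $g(\om_1,\cdot)=\cE{f_{\om_1}}{\Ec_2}$ $P_2$\nobreakdash-a.s., the right-hand conditional expectation being taken on $(\Om_2,\F_2,P_2)$. I would prove this by the functional monotone class theorem. Let $\mathcal H$ be the set of bounded $\measurable\F$ functions for which this identity holds a.s. For a tensor $f(\om_1,\om_2)=\phi(\om_1)\psi(\om_2)$, with $\phi$ bounded $\measurable{\F_1}$ and $\psi$ bounded $\measurable{\F_2}$, the function $\phi(\om_1)\,\cE{\psi}{\Ec_2}(\om_2)$ is manifestly $\measurable{(\overline{\F_1\times\Ec_2})}$ and, because $B\in\Ec_2$ gives $\int_B\cE{\psi}{\Ec_2}\,\D P_2=\int_B\psi\,\D P_2$, it satisfies $\int_{A\times B}\phi(\om_1)\cE{\psi}{\Ec_2}(\om_2)\,\D P=\int_{A\times B}f\,\D P$ over all rectangles $A\in\F_1$, $B\in\Ec_2$; so it equals $g$, and $\mathcal H$ contains all tensors. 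Clearly $\mathcal H$ is a vector space, and it is closed under bounded monotone limits: if $f^{(k)}\uparrow f$, then $\cE{f^{(k)}}{\F_1\times\Ec_2}\uparrow g$ a.s.\ by conditional monotone convergence on the product, while for a.e.\ $\om_1$ one has $\cE{f^{(k)}_{\om_1}}{\Ec_2}\uparrow\cE{f_{\om_1}}{\Ec_2}$ $P_2$\nobreakdash-a.s.\ by the same principle on $\Om_2$, so the fibrewise identity passes to the limit. The functional monotone class theorem then gives $\mathcal H=$ all bounded $\measurable\F$ functions.

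Applying this to our $f$ finishes the argument. Since each section $f_{\om_1}$ is $\measurable{\Ec_2}$, we have $\cE{f_{\om_1}}{\Ec_2}=f_{\om_1}$ for every $\om_1$, whence $g(\om_1,\cdot)=f_{\om_1}$ $P_2$\nobreakdash-a.s.\ for $P_1$\nobreakdash-a.e.\ $\om_1$; by Fubini, $g=f$ a.s.\ on $\Om$. As $g$ is $\measurable{(\overline{\F_1\times\Ec_2})}$ and the completion contains every function a.s.\ equal to one of its members, $f$ is $\measurable{(\overline{\F_1\times\Ec_2})}$, as claimed. I expect the main obstacle to be precisely the fibrewise identity of the crux step, i.e.\ the joint measurability encoded in the fibrewise form of $g$. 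The naive route via Fubini shows only that, \emph{for each fixed} $B\in\Ec_2$, $\int_B(f-g)_{\om_1}\,\D P_2=0$ for a.e.\ $\om_1$, with an exceptional null set depending on $B$; turning this into ``for a.e.\ $\om_1$, for all $B$'' is exactly the quantifier swap that fails when $\Ec_2$ is not countably generated. Phrasing everything through the single object $\cE{\cdot}{\F_1\times\Ec_2}$ and invoking the monotone class theorem sidesteps this difficulty and also automatically provides a coherent choice of versions across fibres.
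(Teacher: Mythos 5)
Your proof is correct, and it takes a genuinely different route from the paper's. After the same preliminary reduction to bounded $f$ (the paper composes with $\arctan$ where you use a Borel isomorphism onto $(0,1)$), the paper argues functional-analytically: it invokes the identification $L_2(\Om,\F,P)=L_2\bigl((\Om_1,\F_1,P_1),\,L_2(\Om_2,\F_2,P_2)\bigr)$, observes that the vector-function $\om_1\mapsto f_{\om_1}$ takes values in the closed subspace $L_2(\Om_2,\Ec_2,P_2)$, and concludes in one line that $f\in L_2(\Om,\F_1\times\Ec_2,P)$ in the $\modO$ sense. You instead establish the fibrewise identity $\cE{f}{\F_1\times\Ec_2}(\om_1,\cdot)=\cE{f_{\om_1}}{\Ec_2}$ by a functional monotone class argument starting from tensors $\phi\otimes\psi$, and then note that $f$ is fixed fibre by fibre by this operator because its sections are already \measurable{\Ec_2}. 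The two arguments encode the same underlying fact --- the orthogonal projection of $L_2(\Om,\F,P)$ onto $L_2(\Om,\F_1\times\Ec_2,P)$ acts fibrewise --- but yours is more elementary and self-contained (only scalar conditional expectations and the monotone class theorem, no vector-valued $L_2$ theory), at the cost of length; it also produces the fibrewise conditional-expectation formula as a reusable byproduct, and your closing remark correctly isolates the quantifier swap that defeats the naive Fubini argument when $\Ec_2$ is not countably generated (precisely the situation of the paper's meagre-set example). The paper's version is shorter but leans on the ``well-known relation'' for vector-valued $L_2$ and on the reader granting that strong measurability into a closed subspace is inherited.
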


\begin{sloppypar}
The proof of Proposition \ref{prop1} is functional\nobreakdash-analytic and is based on the well-known relation
\[
L_2(\Om,\F,P) = L_2 \( (\Om_1,\F_1,P_1), \, L_2(\Om_2,\F_2,P_2) \) \,
.
\]
It means that, first, for every $ f \in L_2(\Om,\F,P) $ its sections
$f_{\om_1}$ belong to $ L_2(\Om_2,\F_2,P_2) $, the vector-function $
\om_1 \mapsto f_{\om_1} $ is \measurable{\bar\F_1}, and $ \int \|
f_{\om_1} \|_2^2 \, P_1(\D\om_1) < \infty $. Second, this map from $
L_2(\Om,\F,P) $ to the space $ L_2 \( (\Om_1,\F_1,P_1), \,
L_2(\Om_2,\F_2,P_2) \) $ of vector-functions is a bijection.
In fact, it is a
linear isometry onto (recall that $L_2$ consists of equivalence
classes rather than functions; everything is treated $\modO$ here).
\end{sloppypar}

\bigskip

\begin{proof}[Proof of Proposition \ref{prop1}]
Fix an \measurable{\F} function $ f $ with
\measurable{\Ec_2} sections.
Assume w.l.o.g.~that $f$ is bounded;
otherwise turn to, say, $ \arctan(f(\cdot)) $.
It follows that $f$
belongs to $ L_2(\Om,\F,P) $. The corresponding vector-function $
\om_1 \mapsto f_{\om_1} $ maps $ \Om_1 $ to $ L_2(\Om_2,\Ec_2,P_2) $.
It thus belongs to $ L_2 \( (\Om_1,\F_1,P_1), \, L_2(\Om_2,\Ec_2,P_2)
\) $, and therefore $ f \in L_2 ( \Om, \F_1\times\Ec_2, P ) $.
\end{proof}

\bigskip

Proposition \ref{prop1} implies the following result.

\begin{corollary}
\label{cor1}
Under the above notations the function $g$ is \measurable{\overline\Ec_2}
\end{corollary}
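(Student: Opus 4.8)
The plan is to reduce the claim to the ordinary Fubini theorem on the (uncompleted) product \sia\ $\F_1\times\Ec_2$, and then to transfer the resulting measurability statement across a null set. First I would invoke Proposition~\ref{prop1}, which gives that $f$ is \measurable{(\overline{\F_1\times\Ec_2})}. By the very meaning of the completion, as recalled in the $\modO$ discussion above, there exists an \measurable{(\F_1\times\Ec_2)} function $\ti f$ with $f=\ti f$ a.s.\ with respect to $P$. As in the proof of Proposition~\ref{prop1} we may assume that $f$, and hence $\ti f$, is bounded, so that all the integrals below are well defined.

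Next I would apply the classical Fubini theorem to $\ti f$ on the genuine product space $(\Om_1\times\Om_2,\F_1\times\Ec_2,P_1\otimes P_2)$, with no completion involved. This yields that the partial integral
\[
\ti g(\om_2) := \int \ti f^{\om_2}\,\D P_1 = \int \ti f(\om_1,\om_2)\,P_1(\D\om_1)
\]
is \measurable{\Ec_2}. This is precisely the point at which working with the product \sia\ $\F_1\times\Ec_2$, rather than with its completion, is essential: only for the uncompleted product \sia\ does the measurability half of Fubini's theorem return $\Ec_2$\nobreakdash-measurability on the nose.

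It remains to compare $g$ with $\ti g$. Set $D := \{(\om_1,\om_2):f(\om_1,\om_2)\neq\ti f(\om_1,\om_2)\}$, which is a \Null{P} set. Applying Fubini to $\One_D$, the section measures $P_1(D^{\om_2})$ vanish for $P_2$-almost every $\om_2$. For every such $\om_2$ the sections $f^{\om_2}$ and $\ti f^{\om_2}$ agree $P_1$-a.s., whence $g(\om_2)=\ti g(\om_2)$. Thus $g=\ti g$ holds $P_2$-a.s., and since $\ti g$ is \measurable{\Ec_2}, the function $g$ is \measurable{\overline\Ec_2}, as claimed.

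I expect the only delicate point to be this last transfer: one must pass from a $\modO$ identity on the product $(\Om,\F,P)$ to a $\modO$ identity on $(\Om_2,\Ec_2,P_2)$, which is exactly what Fubini's theorem applied to the null set $D$ delivers. Everything else is the standard measurability assertion of the Fubini--Tonelli theorem for probability (hence $\sigma$\nobreakdash-finite) measures.
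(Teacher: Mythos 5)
Your argument is correct, and it supplies in full detail the deduction that the paper only asserts when it writes that Proposition \ref{prop1} implies Corollary \ref{cor1}; the proof the paper actually writes out is a genuinely different one, a direct probabilistic argument that represents $g(\om_2)$ as the almost sure limit of the Ces\`aro averages $\frac1n\sum_{k=1}^n f_{\om_{1,k}}(\om_2)$ of \measurable{\Ec_2} sections along a sequence $(\om_{1,k})$ chosen by the strong law of large numbers plus a Fubini selection, and which therefore bypasses Proposition \ref{prop1} entirely. Your route --- pick an \measurable{(\F_1\times\Ec_2)} representative $\ti f$ of $f$, invoke the measurability half of Fubini on the \emph{uncompleted} product \sia\ to get $\ti g$ \measurable{\Ec_2}, then transfer $g=\ti g$ across the \Null{P_2} set obtained by integrating the sections of $\{f\ne\ti f\}$ --- is exactly the standard way to make that implication precise, and each step is sound. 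One small repair is needed: the reduction to bounded $f$ ``as in the proof of Proposition \ref{prop1}'' does not carry over verbatim, since that reduction composes with $\arctan$, and $\arctan$ does not commute with the partial integral defining $g$. Either drop the reduction (Fubini's measurability statement needs only the integrability of $\ti f$, which is the standing hypothesis of the corollary), or truncate at level $M$ and let $M\to\infty$, using dominated convergence and the fact that an a.e.\ pointwise limit of \measurable{\overline\Ec_2} functions is \measurable{\overline\Ec_2}. With that adjustment your proof is complete; it is more elementary in its tools than the paper's SLLN argument, at the price of depending on Proposition \ref{prop1}, which the paper's second proof does not.
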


We provide a second, direct probabilistic proof to Corollary \ref{cor1}.

\bigskip

\begin{proof}
The proof is based on the strong law of large
numbers. Given an integrable function $ f $ with \measurable{\Ec_2} sections we
introduce the product $ (\Om_1^\N,\F_1^\N,P_1^\N) $ of an infinite
sequence of copies of $(\Om_1,\F_1,P_1)$. For $P_2$-almost every $ \om_2 $
the section $ f^{\om_2} $ is integrable. Applying the strong law of
large numbers we have
\begin{equation}\label{*}
\frac1n \sum_{k=1}^n f_{\om_{1,n}} (\om_2) = \frac1n \sum_{k=1}^n
f^{\om_2} (\om_{1,n}) \to \int f^{\om_2} \, \D P_1 = g(\om_2) \quad
\text{as } n\to\infty
\end{equation}
for almost all sequences $(\om_{1,n})_{n\in\N} \in \Om_1^\N $ (the null
set of exceptional sequences may depend on $ \om_2 $). By Fubini
Theorem, there is a sequence such that \eqref{*} holds for almost all
$ \om_2 $.
In fact, almost every sequence will do. Thus, \eqref{*}
represents $ g $ as the almost sure limit of a sequence of
\measurable{\Ec_2} functions, and therefore $ g $ is
\measurable{\overline\Ec_2}.
\end{proof}

\bigskip

\begin{lemma}\label{lemmaRST5}
The detailed distribution of any mixed stopping time is a stopping measure.
\end{lemma}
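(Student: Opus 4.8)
The plan is to check the two requirements in Definition \ref{sm} for the measure $\prob_\mu$ attached to a mixed stopping time $\mu$. Reading off \eqref{equ101}, the density $\rho_n$ determined by $\prob_\mu(A \times \{n\}) = \int_A \rho_n\,\D P$ is
\[
\rho_n(\om) = \int_0^1 \One_{\{n\}}(\mu(r,\om))\,\D r = \lambda\(\{ r \in I : \mu(r,\om) = n \}\), \qquad n \in \Ninf.
\]
The projection requirement is routine: since $\mu(r,\om) \in \Ninf$ for every $(r,\om)$, we have $\sum_{n \in \Ninf}\rho_n(\om) = 1$ for all $\om$, whence $\prob_\mu(A \times (\Ninf)) = \int_A \sum_n \rho_n\,\D P = P(A)$; in particular $\prob_\mu$ is a probability measure with projection $P$. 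I would dispose of this at once.

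The substance of the lemma is the \measurability{\F_n} of the densities, and here the plan is to recognize $\rho_n$ as precisely the averaged function $g$ of Corollary \ref{cor1}. I would apply that corollary with $(\Om_1,\F_1,P_1) = (I,\calB,\lambda)$, $(\Om_2,\F_2,P_2) = (\Om,\F,P)$, the sub-\sia\ $\Ec_2 := \F_n$, and the function $f(r,\om) := \One_{\{n\}}(\mu(r,\om))$. Its two hypotheses hold directly: $f$ is \measurable{(\calB\times\F)} because $\mu$ is a mixed stopping time, and every section $f_r = f(r,\cdot)$ is \measurable{\F_n} because $\mu(r,\cdot)$ is a stopping time, so $\{\om : \mu(r,\om) = n\} \in \F_n$. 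Since the averaged function is $g(\om) = \int f^\om\,\D P_1 = \rho_n(\om)$, Corollary \ref{cor1} yields that $\rho_n$ is \measurable{\overline{\F_n}}.

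To conclude, I would invoke the $\modO$ remarks opening this section: an \measurable{\overline{\F_n}} function agrees a.s.\ with an \measurable{\F_n} function. Thus each $\rho_n$ (for $n \in \N$) equals a.s.\ an $\F_n$-measurable function, which is exactly what Definition \ref{sm} demands (the case $n = \infty$ following automatically), so $\prob_\mu$ is a stopping measure. I do not expect a genuine obstacle; the one point requiring care---and the reason Proposition \ref{prop1} and Corollary \ref{cor1} were set up in advance---is that separate \measurability{\F_n} of the sections $f_r$ does not upgrade to joint \measurability{(\F_1 \times \F_n)}, so $\rho_n$ is only guaranteed to be \measurable{\overline{\F_n}}. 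The crux is simply that this completion---i.e., a.s.\ equality to an $\F_n$-measurable function---is precisely the condition in Definition \ref{sm}.
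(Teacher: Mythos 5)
Your proposal is correct and follows essentially the same route as the paper's own proof: it reads the density $\rho_n(\om)=\int_0^1 \One_{\{n\}}\(\mu(r,\om)\)\,\D r$ off \eqref{equ101} and applies Corollary \ref{cor1} with $(\Om_1,\F_1,P_1)=(I,\calB,\lambda)$, $(\Om_2,\F_2,P_2)=(\Om,\F,P)$, $\Ec_2=\F_n$, and $f=\One_{\{n\}}\(\mu(\cdot,\cdot)\)$ to obtain \measurability{\overline{\F_n}}, which is exactly the condition in Definition \ref{sm}. The extra check that the projection of $\prob_\mu$ onto $\Om$ is $P$ is harmless and correct.
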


\begin{proof}
Let $\mu$ be a mixed stopping time.
By \eqref{equ101},
\begin{equation}
\prob_{\ti N_\mu,n}(A) =
 \int_A P(\D\om) \int_0^1 \D r \,\One_{\{n\}} \( \mu(r,\om) \),
\end{equation}
so that the density of $\prob_{\ti N_\mu,n}$ is
\begin{equation}
\label{equ001}
\rho_n(\om) = \int_0^1 \One_{\{n\}} \( \mu(r,\om) \) \, \D r.
\end{equation}
To prove that $\prob_{\ti N_\mu}$ is a stopping measure it is left to prove that $\rho_n$ is \measurable{\overline\F_n}
for every $n \in \Ninf$.
However, this follows from Corollary \ref{cor1} with
$(\Omega_1,\F_1,P_1) = (I,\B,\lambda)$,
$(\Omega_2,\F_2,P_2) = (\Om,\F,P)$,
$ f(\cdot,\cdot) = \One_{\{n\}} \( \mu(\cdot,\cdot) \) $,
and
$\Ec_2=\F_n$.
\end{proof}

\bigskip

We now provide an example that in the setup of Proposition \ref{prop1}
the function $ f $ need not be \measurable{(\F_1\times\Ec_2)}.
This example shows in particular that the densities $(\rho_n)_{n \in \Ninf}$ in Eq.~(\ref{equ001})
need not be $\F_n$-measurable.

The example is based on the so-called meagre sets on $[0,1]$. A
\emph{meagre} set is the union of countably many nowhere dense sets;
and a nowhere dense set is a set whose closure has no interior
points. By the Baire Category Theorem, a meagre set cannot contain an
interval. The meagre sets form a \sii; that is, a subset of a meagre set is
meagre, and the union of countably many meagre sets is meagre. A set
is \emph{comeagre} if its complement is a meagre set. All meagre and
comeagre sets are a \sia.

A closed subset of $[0,1]$ of nonzero Lebesgue measure need not have
interior points, that is, can be meagre; the well-known
Smith-Volterra-Cantor set, called also \emph{fat Cantor set}, is an example.

\begin{example}
Assume that $ \Om_1 = \Om_2 = [0,1] $; $ \F_1 = \F_2 $
is the Borel \sia\ on $[0,1]$; $ P_1 = P_2 $ is Lebesgue measure; $
\Ec_2 $ consists of all meagre and comeagre Borel sets; and
\[
f(\om_1,\om_2) = \begin{cases}
 1 &\text{if } \om_2 \ne 0 \text{ and } \om_1/\om_2 \in K,\\
 0 &\text{otherwise,}
\end{cases}
\]
where $K$ is the fat Cantor set.

The section $ f_{\om_1} $ is the indicator of the set $ \{ \om_2 :
\frac{\om_1}{\om_2} \in K \} = \{ \frac{\om_1}{k} : k \in K, \,
k\ge\om_1 \} $. Being a homeomorphic image of $ K \cap [\om_1,1] $,
this set is meagre, and therefore $ f_{\om_1} $ is \measurable{\Ec_2}. On
the other hand, $ f^{\om_2} $ is the indicator of $ \om_2 K = \{ \om_2
k : k \in K \} $, thus $ g(\om_2) = \la(\om_2 K) = \om_2 \la(K) $.
The function $ g $ fails to be \measurable{\Ec_2}.
Indeed, if $g$ were
\measurable{\Ec_2}, then the set $ \{ \om_2 : g(\om_2) \le 0.5
\la(K) \} $ would belong to $ \Ec_2 $. However, this set is the
interval $ [0,0.5] $, which is neither meagre nor comeagre on $ [0,1] $.
In particular, $f $ fails to be \measurable{(\F_1\times\Ec_2)}.

It is worth noting that $\Ec_2$ does not contain intervals, but
$\overline\Ec_2$ is the whole Lebesgue \sia\ (which follows easily from
existence of a meagre set of full measure).
\end{example}


\section{Stopping Problems and the Concept of Equivalence}
\label{sec:problems}

In this section we present the model of stopping problems and explore some implications of the equivalence between random stopping times.

A  real-valued process $X = (X_n)_{n \in \dN \cup \{\infty\}}$ is \emph{integrable}
if $\E_P\left[\sup_{n \in \dN \cup \{\infty\}} |X_n|\right] < \infty$.
A \emph{stopping problem}
is an  adapted integrable real-valued process $X = (X_n)_{n \in \dN \cup \{\infty\}}$.

Fix a stopping problem $X = (X_n)_{n \in \dN \cup \{\infty\}}$.
For every stopping time $\sigma$, the expected payoff induced by $\sigma$ is the expectation
\[ \gamma(\sigma;X) := \E_P\left[X_{\sigma(\omega)}(\omega)\right]\]
of the stopped process.
The expected payoff given a randomized stopping time $\rho$ is
\[ \gamma(\rho;X) := \E_{P}\left[\sum_{n \in \dN \cup \{\infty\}} \rho_{n}X_n\right]. \]
The expected payoff given a behavior stopping time $\beta$ is
\[ \gamma(\beta;X) :=
\E_{P}\left[\sum_{n \in \dN}
\left(\prod_{j < n}(1-\beta_j)\right) \beta_n X_{n}
+ \left(\prod_{j \in \dN}(1-\beta_j)\right) X_\infty\right]. \]

The expected payoff given a mixed stopping time $\mu$ is
\[ \gamma(\mu;X) := \int_{0}^1 \gamma(\sigma(r,\cdot);X)\rmd r. \]

The following theorem shows the significance of the concept of equivalent stopping times:
two stopping times are equivalent if, and only if, they yield the same expected payoff in \emph{all}
stopping problems.

\begin{theorem}
\label{th:2}
Two random stopping times $\eta$ and $\eta'$ are equivalent
if, and only if, $\gamma(\eta;X) = \gamma(\eta';X)$ for every stopping problem $X$.
\end{theorem}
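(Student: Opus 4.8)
The plan is to express the expected payoff $\gamma(\eta;X)$ purely in terms of the detailed distribution $\prob_\eta$. This makes the ``only if'' direction immediate and reduces the ``if'' direction to a separation argument using well-chosen test processes.

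First I would establish the representation
\[
\gamma(\eta;X) = \sum_{n \in \Ninf} \E_P[\rho_n X_n] = \int_{\Om \times (\Ninf)} X_n(\om)\, \prob_\eta(\D\om,\D n),
\]
where $(\rho_n)_{n\in\Ninf}$ are the densities of $\prob_\eta$. For a randomized stopping time this is exactly the definition of $\gamma(\rho;X)$; for a behavior stopping time it follows by reading off the densities $\rho_n=\bigl(\prod_{j<n}(1-\beta_j)\bigr)\beta_n$ and $\rho_\infty=\prod_j(1-\beta_j)$ computed in Section \ref{sec:behavior}; and for a mixed stopping time it follows from \eqref{equ001} together with Fubini's theorem. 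Integrability of $(\om,n)\mapsto X_n(\om)$ against $\prob_\eta$ is clear, since $\int |X_n|\,\D\prob_\eta = \E_P\bigl[\sum_n \rho_n|X_n|\bigr]\le \E_P[\sup_n|X_n|]<\infty$ because $\sum_n\rho_n=1$ a.s.\ and $X$ is an integrable process. With this representation the ``only if'' direction is a triviality: if $\prob_\eta=\prob_{\eta'}$ then $\gamma(\eta;X)$ and $\gamma(\eta';X)$ are integrals of the same function against the same measure, and this holds even when $\eta,\eta'$ are of different types.

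For the converse I would show that the hypothesis forces $\prob_\eta(A\times\{n\})=\prob_{\eta'}(A\times\{n\})$ for every $A\in\F$ and $n\in\Ninf$, which pins down the two measures on the generating family of rectangles. Fix $n_0\in\N$ and $A\in\F_{n_0}$, and consider the test process $X$ with $X_{n_0}:=\One_A$ and $X_n:=0$ for $n\ne n_0$. This process is adapted (which is precisely why $A$ must be taken $\F_{n_0}$-measurable) and integrable, and the representation above gives $\gamma(\eta;X)=\E_P[\One_A\rho_{n_0}]=\prob_\eta(A\times\{n_0\})$, and likewise for $\eta'$. Hence the hypothesis yields $\prob_\eta(A\times\{n_0\})=\prob_{\eta'}(A\times\{n_0\})$ for all $A\in\F_{n_0}$; the case $n_0=\infty$ is identical, now with arbitrary $A\in\F=\F_\infty$.

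The main obstacle is upgrading this agreement from $\F_{n_0}$-sets to all $A\in\F$, since adapted test processes can only separate $\F_{n_0}$-measurable events directly. Here I would invoke Theorem \ref{theorem:1}: the detailed distribution of any of the three types is a stopping measure, so $\rho_{n_0}$ and $\rho'_{n_0}$ are each $\overline{\F_{n_0}}$-measurable, i.e.\ a.s.\ equal to $\F_{n_0}$-measurable functions. Two integrable $\F_{n_0}$-measurable functions whose integrals agree over all $A\in\F_{n_0}$ coincide a.s., so $\rho_{n_0}=\rho'_{n_0}$ a.s., whence $\prob_\eta(A\times\{n_0\})=\E_P[\One_A\rho_{n_0}]=\E_P[\One_A\rho'_{n_0}]=\prob_{\eta'}(A\times\{n_0\})$ for \emph{every} $A\in\F$. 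Running this over all $n_0\in\Ninf$ gives $\prob_\eta=\prob_{\eta'}$, that is, $\eta$ and $\eta'$ are equivalent. The measurability of the densities supplied by Theorem \ref{theorem:1} is exactly what makes it harmless that the adapted test processes reach only $\F_{n_0}$-sets.
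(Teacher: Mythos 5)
Your proof is correct and follows essentially the same route as the paper: both directions rest on expressing $\gamma(\eta;X)$ through the densities of $\prob_\eta$ and on testing with adapted processes supported at a single time $n$, with the stopping-measure property from Theorem \ref{theorem:1} supplying the \measurable{\F_n}ity of the densities that is needed to pass from $\F_n$-sets to all of $\F$. The only cosmetic differences are that the paper absorbs that measurability into the test process by taking $X_n=\prob(A\mid \F_n)$ for arbitrary $A\in\F$ rather than running your a.s.-equality argument on the densities, and that it proves the forward direction by approximating $X$ with finite-range adapted processes and dominated convergence instead of your (slightly more direct) integral representation against $\prob_\eta$.
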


\begin{proof}
 We first rewrite the payoff induced by a random stopping time $\eta$ in a more convenient form.
Let an integrable process $X = (X_n)_{n \in \dN \cup \{\infty\}}$ be given,
and let $\rho$, $\beta$, and $\mu$ be a generic randomized, behavior, and mixed stopping time, respectively. One has
\begin{eqnarray}
\gamma(\rho;X) = \E_P\left[ \sum_{n \in \dN \cup \{\infty\}} \rho_n X_n\right]
= \sum_{n \in \dN \cup \{\infty\}}\E_P\left[\rho_n X_n\right],
\end{eqnarray}
\begin{eqnarray}
\gamma(\beta;X) &=&
\E_P\left[\sum_{n \in \dN} \left(\prod_{j<n}(1-\beta_j)\right) \beta_n X_n
+ \left(\prod_{j \in \dN}(1-\beta_j)\right) X_\infty \right]\\
&=&\sum_{n \in \dN} \E_P\left[\left(\prod_{j<n}(1-\beta_j)\right) \beta_n X_n\right]
+ \E_P\left[\left(\prod_{j \in \dN}(1-\beta_j)\right) X_\infty \right],
\noindent
\end{eqnarray}
and
\begin{eqnarray}
\gamma(\mu;X) &=&
\int_{0}^1 \gamma(\mu(r,\cdot);X)\rmd r\\
&=& \int_{0}^1 \E_P[X_{\mu(r,\cdot)}]\rmd r\\
&=& \sum_{n \in \dN \cup \{\infty\}} \int_{0}^1 \E_P[\One_{\{\mu(r,\cdot)=n\}} X_n]\rmd r.
\end{eqnarray}
\noindent
Let $A \in \calF$ and   $n \in \dN \cup \{\infty\}$ be arbitrary  and define
$X = (X_n)_{n \in \dN \cup \{\infty\}}$  by
\[
X_j = \left\{
\begin{array}{lll}
\prob(A\mid \calF_n), & \ \ \ \ \ & j=n,\\
0, & & \textrm{otherwise}.
\end{array}
\right.
\]
For such $X$, one then has, for $\eta=\rho,\beta,\mu$,
\[ \prob_\eta(A \times \{n\}) = \gamma(\eta;X). \]
This proves the reciprocal implication.

We now turn to the direct implication. and let $\eta$ and $\eta'$ be two equivalent random stopping times.
Consider an arbitrary stopping problem $X$. Given $\ep>0$, let $(S_n)_{n \in \Ninf}$
be an adapted process such that the range of $S_n$ is finite for each $n \in \Ninf$, and
$\E[\sup_n|S_n-X_n|]\leq \ep$. Using the equivalence of $\eta$ and $\eta'$ and the above payoff formulas
one has $\gamma(\eta;S)=\gamma(\eta';S)$. The equality  $\gamma(\eta;X)=\gamma(\eta';X)$ then follows by letting $\ep\to 0$ and by dominated convergence.
\end{proof}

\bigskip

Let $X$ be a stopping problem and let $\ep \geq 0$.
A random stopping time $\eta$ is \emph{$\ep$-optimal} if
\[ \gamma(\eta;X) \geq \sup_{\eta'}\gamma(\eta';X)-\ep, \]
where the supremum is taken over all random stopping times that have the same type as $\eta$.
Note that to qualify as $\ep$-optimal,
a random stopping time is compared only to other random stopping times of its own type.
The equivalence between the three types of random stopping times yields the following result.
\begin{corollary}\label{cor:1}
Let $X$ be a stopping problem and $\ep \geq 0$.
If $\eta$ is an $\ep$-optimal random stopping time,
and if $\eta$ and $\eta'$ are equivalent,
then $\eta'$ is an $\ep$-optimal random stopping time as well.
\end{corollary}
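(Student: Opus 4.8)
The plan is to reduce the claim to an equality of optimal values across stopping-time types, and then to derive that equality from the two theorems already established. Write $T$ for the type of $\eta$ and $T'$ for the type of $\eta'$ (each being randomized, behavior, or mixed), and set
\[
V_T := \sup_{\zeta}\gamma(\zeta;X), \qquad V_{T'} := \sup_{\zeta'}\gamma(\zeta';X),
\]
where the first supremum ranges over all stopping times of type $T$ and the second over all stopping times of type $T'$. By the definition of $\ep$-optimality, the hypothesis reads $\gamma(\eta;X)\geq V_T-\ep$, and the conclusion to be proved is $\gamma(\eta';X)\geq V_{T'}-\ep$.

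First I would invoke Theorem~\ref{th:2}: since $\eta$ and $\eta'$ are equivalent, they yield the same expected payoff in every stopping problem, so in particular $\gamma(\eta';X)=\gamma(\eta;X)$. Combining this with the hypothesis gives $\gamma(\eta';X)\geq V_T-\ep$, so it remains only to establish that $V_T=V_{T'}$.

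The main step is this equality of values, which is exactly where the equivalence of the three types enters. Fix any stopping time $\zeta$ of type $T$. Its detailed distribution $\prob_\zeta$ is a stopping measure (by the implication from (b), (c), or (d) to (a) in Theorem~\ref{theorem:1}), and hence, again by Theorem~\ref{theorem:1}, is the detailed distribution of some stopping time $\zeta'$ of type $T'$; thus $\zeta$ and $\zeta'$ are equivalent. Applying Theorem~\ref{th:2} once more gives $\gamma(\zeta;X)=\gamma(\zeta';X)\leq V_{T'}$. Taking the supremum over all $\zeta$ of type $T$ yields $V_T\leq V_{T'}$, and the reverse inequality follows by the symmetric argument, exchanging the roles of $T$ and $T'$. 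Hence $V_T=V_{T'}$, and the desired inequality $\gamma(\eta';X)\geq V_{T'}-\ep$ follows. (Integrability of $X$ guarantees all payoffs, and thus both suprema, are finite, though this is not needed for the argument.)

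The hard part is conceptual rather than computational: one must notice that $\ep$-optimality compares $\eta'$ only against stopping times of its \emph{own} type, so the supremum defining optimality changes when passing from $\eta$ to $\eta'$. The content of the corollary is precisely that this supremum is type-independent, a fact supplied by Theorem~\ref{theorem:1} together with Theorem~\ref{th:2}. No delicate estimates are required beyond these two cited results.
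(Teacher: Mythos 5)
Your proof is correct and follows exactly the route the paper intends: the paper states the corollary without a written proof, attributing it to ``the equivalence between the three types of random stopping times,'' i.e.\ precisely your combination of Theorem~\ref{th:2} (equivalent stopping times have equal payoffs) with Theorem~\ref{theorem:1} (every stopping time of one type has an equivalent one of each other type), which yields type-independence of the supremum. Your explicit identification of the type-dependence of the supremum in the definition of $\ep$-optimality as the only real content of the corollary is exactly right.
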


Using Theorem \ref{theorem:1},  Corollary \ref{cor:1} implies that the value of an optimal stopping problem is
independent of the notion of random stopping times that is being used.

\section{Random Stopping Times in Stopping Games}
\label{games}

In this section we study stopping games, that is, multiplayer stopping problems.
We show that Theorem \ref{th:2} and its consequences extend to this setup.

Given a finite set $J$, we denote by $\calP_*(J)$ the set of all nonempty subsets of $J$.

\begin{definition}
A  \emph{stopping game} is given by a finite set $J$ of players and an adapted and
integrable process $X=(X_n)_{n\in \dN\cup\{\infty\}}$ with values in $\dR^{J\times \calP_*(J)}$
\end{definition}

In a stopping game each player $j \in J$ chooses a stopping time $\sigma_j$.
As a function of the profile $\sigma=(\sigma_j)_{j\in J}$,
 player $j$'s payoff is given by
\begin{eqnarray*} \gamma^j(\sigma;X) &=&
\E[X^{j,J(\omega)}_{\sigma_*(\omega)}(\omega)],
\end{eqnarray*}
where $\sigma_*:=\min_{j'\in J}\sigma_{j'}$ is the time at which the game terminates
and $J(\omega):=\{j'\in J: \sigma_{j'}(\omega)=\sigma_*(\omega)\}$
is the set of players who chose to stop at that time.
From here on, we focus on the case of two players.
All the results extend to more-than-two-player games, with obvious changes.

Given a profile $\eta = (\eta_j)_{j \in J}$ of random stopping times, the \emph{detailed distribution} of $\eta$ is defined following Section \ref{model}.
To avoid duplication, we omit the details, which are standard.
For illustration, we explain how to adjust Section \ref{sec:randomized}.

Let  $\rho=(\rho_1,\rho_2) = (\rho_{1,n}, \rho_{2,n})_{n \in \N \cup \{\infty\}}$ be a pair of randomized stopping times,
not necessarily of the same type.
Set $(\ti\Omega,\ti\F,\ti P) = (I_1\times I_2 \times \Om, \calB_1\times \calB_2 \times \F, \lambda \otimes\lambda\otimes P)$,
where $(I_1,\calB_1,\lambda)$ and $(I_2,\calB_2,\lambda)$ are two copies of $(I,\calB,\lambda)$.
For every $(r_1,r_2,\omega) \in\ti\Om$ set
$\alpha(r_1,r_2,\omega) = \om$ and
\begin{equation}
\label{equ:tau:rho2}
\ti N^j_\rho(r_1,r_2,\omega) = \min\left\{n \in \N \colon \sum_{k=1}^n \rho_{i,k}(\omega) \geq r_j\right\},
\end{equation}
for each player $j=1,2$.
The detailed distribution $P_\rho$ of the pair  $\rho =(\rho_1,\rho_2)$ of randomized stopping times is given by
\[ \prob_\rho(A \times \{n_1\}\times \{n_2\}) = \ti P_{\ti N_\rho}(A \times \{n_1\}\times \{n_2\})
= \E_P[\One_A \rho_{1,n_1}\rho_{2,n_2}], \ \ \ \forall A \in \calF, \forall n_1,n_2 \in \dN \cup \{\infty\} .\]

This  definition formalizes  the implicit assumption that
the randomizations done by the players are independent.
Note that the detailed distribution $\prob_{(\rho_1,\rho_2)}$ is a probability distribution over $\Omega\times (\dN_1\cup\{\infty\})\times (\dN_2\cup\{\infty\})$,
whose marginals over $\Omega\times (\dN_1\cup\{\infty\})$ and $\Omega\times (\dN_2\cup\{\infty\})$ coincide with the detailed distributions of $\rho_1$ and $\rho_2$ respectively.

We now introduce another notion  of equivalence between random stopping times, motivated by stopping games.

\begin{definition}
Two random stopping times $\eta_1$ and $\eta'_1$ of Player 1, not necessarily of the same type, are \emph{game equivalent}
if for every random stopping time $\eta_2$ of player 2, one has
\[ \prob_{\eta_1,\eta_2} = \prob_{\eta'_1,\eta_2}. \]
\end{definition}

The expected payoff that a profile of random strategies $\eta = (\eta_j)_{j \in J}$ induces is defined analogously to the definition
in Section \ref{games}.
We here provide the definition only for pairs of randomized stopping times $(\rho_1,\rho_2)$.
\[
\gamma^j(\rho_1,\rho_2;X) := \E_P[X^{j,J(\omega)}_{\min\{\widetilde N^1_\rho,\widetilde N^2_\rho\}}], \]
where $J(\omega) = \{j \colon \widetilde N^j_\rho = \min\{\widetilde N^1_\rho,\widetilde N^2_\rho\}\}$ is the set of players who
stop at the stopping time $\min\{\widetilde N^1_\rho,\widetilde N^2_\rho\}$.

The following result is the analog of Theorem \ref{th:2} to stopping games. We omit the proof.
\begin{theorem}
\label{th:5.1}
Two random stopping times $\eta_1$ and $\eta'_1$ of Player 1 are game equivalent if, and only if,
for every stopping game $X$ and every random stopping time $\eta_2$ of Player 2,
\[ \gamma^j(\eta_1,\eta_2;X) = \gamma^j(\eta'_1,\eta_2;X), \ \ \ \forall j \in \{1,2\}.\]
\end{theorem}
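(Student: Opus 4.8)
The plan is to prove the equivalence through a short cycle of implications that reduces game-equivalence to the ordinary equivalence $\prob_{\eta_1}=\prob_{\eta'_1}$, which Theorem~\ref{th:2} already characterizes through single-player payoffs. Denote by (1) the statement that $\eta_1$ and $\eta'_1$ are game equivalent, by (2) the payoff condition in the theorem, and by (3) the equality $\prob_{\eta_1}=\prob_{\eta'_1}$ of the individual detailed distributions. I will establish $(1)\Rightarrow(2)$, $(2)\Rightarrow(3)$, and $(3)\Rightarrow(1)$.

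The implication $(1)\Rightarrow(2)$ mirrors the direct implication of Theorem~\ref{th:2}. Fixing a stopping game $X$ and a stopping time $\eta_2$ of Player~2, I would write the payoff as the integral of a single functional against the joint detailed distribution: define $\Phi^j$ on $\Om\times(\Ninf)\times(\Ninf)$ by $\Phi^j(\om,n_1,n_2)=X^{j,\{1\}}_{n_1}(\om)$ if $n_1<n_2$, $=X^{j,\{2\}}_{n_2}(\om)$ if $n_2<n_1$, and $=X^{j,\{1,2\}}_{n_1}(\om)$ if $n_1=n_2$, so that $\gamma^j(\eta_1,\eta_2;X)=\int\Phi^j\,\D\prob_{\eta_1,\eta_2}$. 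Then $\prob_{\eta_1,\eta_2}=\prob_{\eta'_1,\eta_2}$ forces the two payoffs to agree; exactly as in Theorem~\ref{th:2}, integrability is handled by approximating $X$ with finite-range processes and passing to the limit by dominated convergence.

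For $(2)\Rightarrow(3)$ I would specialize the hypothesis to the stopping time $\eta_2\equiv\infty$ of Player~2 that never stops. Under this $\eta_2$ the game always terminates at Player~1's stopping time, with Player~1 stopping alone whenever she stops in finite time, so for any integrable single-player process $Y$ the game $X$ defined by $X^{1,\{1\}}_n=Y_n$ for $n\in\dN$, $X^{1,\{1,2\}}_\infty=Y_\infty$, and all remaining components zero satisfies $\gamma^1(\eta_1,\eta_2;X)=\gamma(\eta_1;Y)$, and similarly for $\eta'_1$. Hence (2) yields $\gamma(\eta_1;Y)=\gamma(\eta'_1;Y)$ for every stopping problem $Y$, and Theorem~\ref{th:2} gives $\prob_{\eta_1}=\prob_{\eta'_1}$, which is (3).

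I expect $(3)\Rightarrow(1)$ to be the delicate step, since it asserts that the joint object $\prob_{\eta_1,\eta_2}$ depends only on the two individual detailed distributions. The plan is to record, as a preliminary lemma, that because the two players randomize independently the joint detailed distribution factorizes as $\prob_{\eta_1,\eta_2}(A\times\{n_1\}\times\{n_2\})=\int_A\rho_{1,n_1}\,\rho_{2,n_2}\,\D P$, where $\rho_{i,\cdot}$ is the family of conditional densities attached to $\eta_i$ in Section~\ref{RIRV}; verifying this factorization uniformly across the three types, and for profiles whose components are of different types as the statement allows, is the main work, with Theorem~\ref{theorem:1} available to pass to randomized representatives. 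Since, by Section~\ref{RIRV}, the densities $\rho_{i,n}$ are determined almost surely by the detailed distribution $\prob_{\eta_i}$, the equality $\prob_{\eta_1}=\prob_{\eta'_1}$ gives $\rho_{1,n}=\rho'_{1,n}$ a.s.\ for every $n$, whence $\prob_{\eta_1,\eta_2}=\prob_{\eta'_1,\eta_2}$ for every $\eta_2$, that is (1). Chaining $(2)\Rightarrow(3)\Rightarrow(1)\Rightarrow(2)$ then closes the equivalence.
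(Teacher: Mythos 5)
The paper states Theorem \ref{th:5.1} without proof (``We omit the proof''), so there is no official argument to measure yours against; your proof is correct, and its structure departs in an instructive way from a naive transplant of the proof of Theorem \ref{th:2}. The subtlety you implicitly handle is that for a \emph{fixed} $\eta_2$ the payoffs $\gamma^j(\eta_1,\eta_2;X)$ depend only on the law of $(\om,\sigma_*,J)$ --- the state, the minimum of the two stopping times, and the set of minimizers --- and not on the full joint law of $(\om,\ti N^1,\ti N^2)$; hence one cannot recover $\prob_{\eta_1,\eta_2}(A\times\{n_1\}\times\{n_2\})$ by testing against cleverly chosen games for that single $\eta_2$, the way the one-player argument recovers $\prob_\eta(A\times\{n\})$. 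Your detour through the marginal equivalence $\prob_{\eta_1}=\prob_{\eta'_1}$ (extracted by playing against the never-stopping $\eta_2$) combined with the factorization $\prob_{\eta_1,\eta_2}(A\times\{n_1\}\times\{n_2\})=\int_A\rho_{1,n_1}\rho_{2,n_2}\,\D P$ is exactly what bridges this gap. The factorization itself is routine: on the extended space the two players' randomization devices are independent of each other conditionally on $\om$, so the conditional law of $(\ti N^1,\ti N^2)$ given $\om$ is the product $\rho_{1,\cdot}(\om)\otimes\rho_{2,\cdot}(\om)$ for every combination of types, and the densities $\rho_{1,n}$ are a.s.\ determined by $\prob_{\eta_1}$ by Radon--Nikodym uniqueness. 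Two further remarks. First, your step $(3)\Rightarrow(1)$ is in effect an independent proof of the main theorem of Section \ref{games} (equivalence implies game equivalence), which the paper instead derives \emph{from} Theorem \ref{th:5.1} via the auxiliary process $\ti X$; your ordering is not circular and arguably cleaner. Second, in $(1)\Rightarrow(2)$ the approximation by finite-range processes is unnecessary: $|\Phi^j|$ is dominated by $\sup_{n,C}|X^{j,C}_n|$, which is integrable against $\prob_{\eta_1,\eta_2}$ because the $\Om$-marginal of that measure is $P$, so the identity $\gamma^j(\eta_1,\eta_2;X)=\int\Phi^j\,\D\prob_{\eta_1,\eta_2}$ already gives the conclusion.
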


The main result of this section is the following.
\begin{theorem}
Two random stopping times  of Player 1 are game equivalent if and only if they are equivalent.
\end{theorem}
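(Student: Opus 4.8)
The plan is to reduce game equivalence to a single factorization property of the joint detailed distribution, after which both implications are short. The property I would establish first is: for any pair $(\eta_1,\eta_2)$ of random stopping times of Players 1 and 2 (of arbitrary, possibly different, types), with associated densities $(\rho_{1,n})_{n\in\Ninf}$ and $(\rho_{2,n})_{n\in\Ninf}$ obtained from their stopping measures via Theorem \ref{theorem:1}, one has
\[
\prob_{\eta_1,\eta_2}(A \times \{n_1\} \times \{n_2\}) = \E_P[\One_A\,\rho_{1,n_1}\,\rho_{2,n_2}], \qquad \forall A \in \F,\ \forall n_1,n_2 \in \Ninf.
\]
This is the product formula displayed in the text for two randomized stopping times; the content is that it holds irrespective of the types of $\eta_1$ and $\eta_2$. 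To prove it I would work directly from the general construction of the joint detailed distribution (the one the text calls ``standard'' and analogous to Section \ref{sec:randomized}): it lives on a product space carrying two mutually independent external randomizations, one for each player, both independent of $\om$, with $\ti N^1$ a function of Player 1's randomization and $\om$ alone, and $\ti N^2$ a function of Player 2's randomization and $\om$ alone. Consequently $\ti N^1$ and $\ti N^2$ are conditionally independent given $\om$, with conditional laws $\rho_{1,\cdot}(\om)$ and $\rho_{2,\cdot}(\om)$ by the conditional-distribution identity of Section \ref{RIRV}; integrating the product of these conditional probabilities over $A$ against $P$ gives the formula.

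Granting this lemma, both directions are immediate. For \emph{game equivalent $\Rightarrow$ equivalent}, it suffices to use one $\eta_2$: as noted in the text, the marginal of $\prob_{\eta_1,\eta_2}$ over the Player 1 coordinates $\Om\times(\Ninf)$ equals $\prob_{\eta_1}$, so $\prob_{\eta_1,\eta_2}=\prob_{\eta'_1,\eta_2}$ projects to $\prob_{\eta_1}=\prob_{\eta'_1}$. For \emph{equivalent $\Rightarrow$ game equivalent}, suppose $\prob_{\eta_1}=\prob_{\eta'_1}$. Since the densities of a stopping measure are unique almost surely, the Player 1 densities agree, $\rho_{1,n}=\rho'_{1,n}$ a.s.\ for every $n\in\Ninf$. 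Then for any $\eta_2$ with densities $(\rho_{2,n})$ the factorization lemma gives
\[
\prob_{\eta_1,\eta_2}(A \times \{n_1\} \times \{n_2\}) = \E_P[\One_A\,\rho_{1,n_1}\,\rho_{2,n_2}] = \E_P[\One_A\,\rho'_{1,n_1}\,\rho_{2,n_2}] = \prob_{\eta'_1,\eta_2}(A \times \{n_1\} \times \{n_2\})
\]
for all $A,n_1,n_2$, whence $\prob_{\eta_1,\eta_2}=\prob_{\eta'_1,\eta_2}$; since $\eta_2$ was arbitrary, $\eta_1$ and $\eta'_1$ are game equivalent.

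The only substantive step, and hence the expected main obstacle, is the factorization lemma — specifically the justification of conditional independence of $\ti N^1$ and $\ti N^2$ given $\om$ when the two stopping times are of \emph{different} types. One cannot simply quote the product formula, which the text states only for two randomized stopping times, because reducing a general $\eta_2$ to an equivalent randomized one already presupposes that such a replacement preserves the \emph{joint} distribution — i.e., it presupposes the very factorization to be shown. The correct route is therefore to derive the product formula from the independence of the two external randomization sources in the general joint construction; once that independence and the ``$\ti N^j$ sees only its own source together with the common $\om$'' structure are verified, the factorization, and with it the theorem, follows.
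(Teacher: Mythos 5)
Your proof is correct, but the reverse implication follows a genuinely different route from the paper's. The paper argues through payoffs: given equivalent $\eta_1,\eta'_1$, an arbitrary $\eta_2$ and an arbitrary game $X$, it builds the auxiliary one-player stopping problem $\ti X$ that Player 1 faces when Player 2 uses $\eta_2$ (on the enlarged filtered space carrying $\ti N_{\eta_2}$), notes that $\gamma(\bar\eta_1;\ti X)=\gamma^1(\bar\eta_1,\eta_2;X)$, transfers equality of payoffs via the equivalence of $\eta_1$ and $\eta'_1$, and then invokes Theorem \ref{th:5.1} (the payoff characterization of game equivalence, stated without proof) to conclude. You instead work entirely at the level of detailed distributions: you prove a factorization lemma, $\prob_{\eta_1,\eta_2}(A\times\{n_1\}\times\{n_2\})=\E_P[\One_A\,\rho_{1,n_1}\rho_{2,n_2}]$, valid for arbitrary types, by exploiting the conditional independence of $\ti N^1$ and $\ti N^2$ given $\om$ in the joint construction (a Fubini computation identifying each inner integral with a version of the corresponding density), and then use a.s.\ uniqueness of the densities. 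Your version is more self-contained: it bypasses Theorem \ref{th:5.1} and the payoff machinery altogether, and it also makes explicit the product formula for pairs of different types, which the paper only asserts for two randomized stopping times and otherwise leaves as ``standard.'' Your closing remark correctly identifies the one point that must not be finessed --- one cannot first replace $\eta_2$ by an equivalent randomized stopping time, since that substitution preserving the \emph{joint} law is exactly what is at stake --- and your derivation from the independence of the two external randomization sources handles it properly. The paper's approach, for its part, buys a direct game-theoretic interpretation and reuses Theorems \ref{th:2} and \ref{th:5.1}; note, though, that its appeal to the equivalence of $\eta_1$ and $\eta'_1$ for the stopping problem $\ti X$, which is adapted to the \emph{enlarged} filtration, implicitly relies on the same factorization you prove, so your lemma is arguably the cleaner foundation.
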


\begin{proof}
The direct implication is straightforward. Indeed, let $\eta_1$ and $\eta'_1$ be two game-equivalent random stopping times of player 1,
and let $\eta_2$
be a random stopping time of player 2. By assumption, $\prob_{(\eta_1,\eta_2)}=\prob_{(\eta_1',\eta_2)}$.
Taking marginals, it follows that the detailed distributions $\prob_{\eta_1}$ and $\prob_{\eta'_1}$ coincide,
so that $\eta_1$ and $\eta'_1$ are equivalent.

We now turn to the reverse implication. We let $\eta_1$ and $\eta'_1$ be two equivalent random stopping times of player 1,
let $\eta_2$ be a random stopping time of player 2, and let $X$ be a two-player stopping game.
Building on Section \ref{model}, consider a filtered probability space $(\ti\Omega,\ti\calF, (\ti \calF_n)_{n \in \N},\ti P)$ derived from
the given filtered probability space $(\Omega,\calF, ( \calF_n)_{n \in \N} P)$, on which the two pairs $(\eta_1,\eta_2)$ and $(\eta'_1,\eta_2)$ can be presented as randomized integer-valued random variables.
In particular, the stopping time $\ti N_{\eta_2}$ is defined by
(\ref{equ:tau:rho}), (\ref{equ:tau:beta}), or (\ref{equ:tau:mu}).

Define an adapted real-valued process $\ti X=(\ti X_n)_{n \in \dN\cup \{\infty\}}$ by setting (i) $\ti X_n= X_n^{1,\{1\}}$ on the event $\ti N_{\eta_2} >n$,
(ii)  $\ti X_n= X_n^{1,\{1,2\}}$ on the event $\ti N_{\eta_2} =n$ and (iii) $\ti X_n= X_{\ti N_{\eta_2}}^{2,\{2\}}$. Intuitively, $\ti X$ is the optimal stopping problem faced by player 1 when player 2 is using the random stopping time $\eta_2$.

By construction, for every random stopping time $\bar \eta_1$ of Player 1 we have
\begin{equation}\label{equ:15}\gamma(\bar \eta_1;\ti X)=\gamma^1(\eta_1,\eta_2;X)
\end{equation}
Since $\eta_1$ and $\eta'_1$ are equivalent, one has $\gamma(\eta_1;\ti X)=\gamma(\eta'_1;\ti X)$, so that
$\gamma^1(\eta_1,\eta_2;X)=\gamma^1(\eta'_1,\eta_2;X)$. Since $\eta_2$ and $X$ are arbitrary, and by Theorem \ref{th:5.1}, this implies that
$\eta_1$ and $\eta'_1$ are game equivalent.
\end{proof}

\bigskip

We conclude by listing a few direct consequences of the latter results. We start with zero-sum games.
A two-player stopping game is \emph{zero-sum} if $X^{1,C}+X^{2,C} =  0$, for each nonempty subset $C$ of $\{1,2\}$.
In two-player zero-sum stopping games, given $\ep \geq 0$,
a random stopping time $\eta^*_1$ is \emph{$\ep$-optimal} for Player 1 if
\[ \inf_{\eta_2}\gamma^1(\eta^*_1,\eta_2;X) \geq \sup_{\eta_1}\inf_{\eta_2}\gamma^1(\eta_1,\eta_2;X)-\ep, \]
where the inf and sup are over all random stopping times that have the same type as $\eta^*_1$.
$\ep$-optimal stopping times for Player 2 are defined analogously.
The equivalence between the three types of random stopping times delivers the following result.

\begin{theorem}
\label{th:value1}
If $\eta^*_i$ is an $\ep$-optimal random stopping time for player $i$,
then
\[ \inf_{\eta_2}\gamma^1(\eta^*_1,\eta_2;X) \geq \sup_{\eta_1}\inf_{\eta_2}\gamma^1(\eta_1,\eta_2;X)-\ep, \]
where the infimum and  supremum are taken over \emph{all} random stopping times (of all three types).
\end{theorem}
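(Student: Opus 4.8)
The plan is to show that, for a fixed stopping game $X$, replacing the type\nobreakdash-restricted infima and suprema in the definition of $\ep$\nobreakdash-optimality by the unrestricted ones over all three types changes nothing, so that the hypothesized inequality carries over verbatim. Write $T$ for the type (randomized, behavior, or mixed) of $\eta_1^*$; I treat the case $i=1$, the case $i=2$ being symmetric. By hypothesis,
\[ \inf_{\eta_2\in T}\gamma^1(\eta_1^*,\eta_2;X)\ \ge\ \sup_{\eta_1\in T}\inf_{\eta_2\in T}\gamma^1(\eta_1,\eta_2;X)-\ep, \]
every infimum and supremum ranging over stopping times of type $T$. It therefore suffices to establish the two equalities
\[ \inf_{\eta_2}\gamma^1(\eta_1^*,\eta_2;X)=\inf_{\eta_2\in T}\gamma^1(\eta_1^*,\eta_2;X),\qquad \sup_{\eta_1}\inf_{\eta_2}\gamma^1(\eta_1,\eta_2;X)=\sup_{\eta_1\in T}\inf_{\eta_2\in T}\gamma^1(\eta_1,\eta_2;X), \]
where the unrestricted inf/sup run over all three types; chaining these with the hypothesis then gives the theorem.

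Both equalities rest on a single invariance observation, which I would isolate first. Set $V(\eta_1):=\inf_{\eta_2}\gamma^1(\eta_1,\eta_2;X)$, the infimum taken over all random stopping times $\eta_2$ of Player 2. First, I claim that for every fixed $\eta_1$ this unrestricted infimum equals the infimum over type $T$ alone. Indeed, any $\eta_2$ of any type has, by Theorem \ref{theorem:1}, an equivalent stopping time $\eta_2'$ of type $T$; by the symmetric form of the theorem that game equivalence coincides with equivalence, $\eta_2$ and $\eta_2'$ are game equivalent, so that $\prob_{\eta_1,\eta_2}=\prob_{\eta_1,\eta_2'}$ and, by Theorem \ref{th:5.1}, $\gamma^1(\eta_1,\eta_2;X)=\gamma^1(\eta_1,\eta_2';X)$. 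Thus the set of attainable values $\{\gamma^1(\eta_1,\eta_2;X)\}$ is unchanged when $\eta_2$ is restricted to type $T$, and the two infima agree. Second, I claim that $V$ depends only on the equivalence class of $\eta_1$: if $\eta_1$ and $\eta_1'$ are equivalent then they are game equivalent, so by Theorem \ref{th:5.1} one has $\gamma^1(\eta_1,\eta_2;X)=\gamma^1(\eta_1',\eta_2;X)$ for every $\eta_2$, whence $V(\eta_1)=V(\eta_1')$.

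With these two facts the theorem follows. The first equality is the first claim applied with $\eta_1=\eta_1^*$. For the second, the first claim gives $V(\eta_1)=\inf_{\eta_2\in T}\gamma^1(\eta_1,\eta_2;X)$ for every $\eta_1$, and in particular $\sup_{\eta_1\in T}V(\eta_1)=\sup_{\eta_1\in T}\inf_{\eta_2\in T}\gamma^1(\eta_1,\eta_2;X)$; moreover, since every $\eta_1$ of any type has, by Theorem \ref{theorem:1}, an equivalent $\eta_1'$ of type $T$ with $V(\eta_1)=V(\eta_1')$ by the second claim, the supremum of $V$ over all types equals its supremum over type $T$. Combining yields $\sup_{\eta_1}V(\eta_1)=\sup_{\eta_1\in T}\inf_{\eta_2\in T}\gamma^1(\eta_1,\eta_2;X)$, which is the second equality. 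Substituting both equalities into the hypothesis gives the asserted inequality over all types. The argument is essentially bookkeeping on top of Theorems \ref{theorem:1} and \ref{th:5.1} together with the game\nobreakdash-equivalence theorem; the only point requiring care is to keep the three type\nobreakdash-restrictions straight and to invoke the Player~2 versions of the equivalence results by symmetry, which is where I expect the main (if modest) obstacle to lie.
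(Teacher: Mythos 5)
Your argument is correct, and it is essentially the argument the paper has in mind: the paper states Theorem \ref{th:value1} without proof, presenting it as a direct consequence of the equivalence results, and your write-up simply makes explicit the bookkeeping (type-restricted inf/sup equal unrestricted ones, via Theorem \ref{theorem:1}, the game-equivalence theorem applied symmetrically to Player 2, and Theorem \ref{th:5.1}) that the authors leave implicit.
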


The equivalence also shows that any random stopping time that is equivalent to some $\ep$-optimal stopping time
is also $\ep$-optimal.
\begin{theorem}
\label{th:value2}
If $\eta_i$ is an $\ep$-optimal stopping time for player $i$,
and if $\eta'_i$ is equivalent to $\eta_i$,
then $\eta'_i$ is an $\ep$-optimal random stopping time for player $i$.
\end{theorem}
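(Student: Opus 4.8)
The plan is to show that, for the zero-sum game $X$, the lower value
\[
V:=\sup_{\eta_1}\inf_{\eta_2}\gamma^1(\eta_1,\eta_2;X)
\]
is the same whether the supremum and infimum range over a single type of random stopping time or over all of them, and that it is unchanged when a stopping time is replaced by an equivalent one. I would treat $i=1$; the case $i=2$ is symmetric. The three tools I would use are Theorem \ref{theorem:1} (every random stopping time admits an equivalent one of each of the three types), the preceding theorem (equivalence and game-equivalence coincide), and Theorem \ref{th:5.1} (game-equivalent stopping times induce the same payoff in every stopping game). Together these yield the payoff-invariance principle I rely on throughout: if two stopping times of a player are equivalent, they induce the same $\gamma^j(\cdot,\cdot;X)$ against every opponent stopping time, for every game $X$ and every $j$; the statement for Player 2 follows by symmetry.

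First I would establish that the infima and the lower value are type-independent. Fix a type $T'$ and a stopping time $\eta_1$ of Player 1. For any $\eta_2$ whatsoever, Theorem \ref{theorem:1} supplies a stopping time of type $T'$ equivalent to it, and by Player-2 payoff invariance this substitution leaves $\gamma^1(\eta_1,\cdot;X)$ unchanged; hence $\inf_{\eta_2\in T'}\gamma^1(\eta_1,\eta_2;X)=\inf_{\eta_2}\gamma^1(\eta_1,\eta_2;X)$, the infimum over all random stopping times. Running the same argument on the outer supremum --- each $\eta_1$ has a type-$T'$ equivalent that, by Player-1 payoff invariance, yields the same inner infimum --- gives $\sup_{\eta_1\in T'}\inf_{\eta_2\in T'}\gamma^1(\eta_1,\eta_2;X)=V$ for every type $T'$.

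The conclusion is then immediate. Let $T$ be the type of $\eta_1$ and $T'$ the type of $\eta_1'$. Because the type-$T$ sup-inf equals $V$, the hypothesis that $\eta_1$ is $\ep$-optimal reads $\inf_{\eta_2\in T}\gamma^1(\eta_1,\eta_2;X)\ge V-\ep$. Using the type-independence of the inner infimum at $\eta_1'$ and at $\eta_1$, together with Player-1 payoff invariance $\gamma^1(\eta_1',\eta_2;X)=\gamma^1(\eta_1,\eta_2;X)$, I would obtain
\[
\inf_{\eta_2\in T'}\gamma^1(\eta_1',\eta_2;X)=\inf_{\eta_2}\gamma^1(\eta_1',\eta_2;X)=\inf_{\eta_2}\gamma^1(\eta_1,\eta_2;X)=\inf_{\eta_2\in T}\gamma^1(\eta_1,\eta_2;X)\ge V-\ep.
\]
Since the type-$T'$ sup-inf also equals $V$, this is exactly the assertion that $\eta_1'$ is $\ep$-optimal within its own type.

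I expect the only real content to lie in the type-independence step: it is where Theorem \ref{theorem:1} and the payoff-invariance principle interact, and where one must check that the type-$T'$ representative furnished by Theorem \ref{theorem:1} genuinely preserves the payoff --- guaranteed by the symmetric (Player-2) form of the coincidence of equivalence and game-equivalence combined with Theorem \ref{th:5.1}. Once the infima and the lower value are known to be type-free, everything else is order-theoretic, and the case $i=2$ follows by interchanging the two players and reversing the inequalities.
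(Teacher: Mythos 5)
Your argument is correct and is essentially the one the paper intends: the paper states this theorem without proof as a direct consequence of the equivalence results, and the natural fleshing-out is exactly what you do --- combine the payoff-invariance of equivalent stopping times (via game-equivalence and Theorem \ref{th:5.1}) with the type-independence of the inner infimum and of the lower value (which is the content of Theorem \ref{th:value1}). Your handling of the subtlety that the definition of $\ep$-optimality restricts both the infimum and the supremum to the type of the given stopping time, and your appeal to the Player-2 symmetric form of the equivalence results, are both needed and correctly supplied.
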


Given a class (randomized, behavior, or mixed) of random stopping times, we say that the game has a \emph{value} in that class if $\sup_{\eta_1}\inf_{\eta_2}\gamma^1(\eta_1,\eta_2;X) = \inf_{\eta_2}\sup_{\eta_1}\gamma^1(\rho_1,\rho_2;X)$ where the supremum and infimum are taken
over $\eta_1$ and $\eta_2$ in that class. The common value of $\sup\inf=\inf\sup$ is called the value of the game.

By Theorem \ref{th:value1},  the existence of the value does not hinge on which class of random stopping times is being considered,
and the value of the game remains the same.
Rosenberg, Solan, and Vieille (2001) proved that two-player zero-sum stopping games have a value in behavior stopping times.
We thus obtain the following result.
\begin{theorem}
Every two-player zero-sum stopping game has a value in randomized stopping times and a value in mixed stopping times.
\end{theorem}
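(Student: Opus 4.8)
The plan is to derive the result from two ingredients: the existence of a value in behavior stopping times, due to Rosenberg, Solan, and Vieille (2001), and the fact that the upper and lower values of the game do not depend on the class of random stopping times over which they are computed. For a class $\calC$ (randomized, behavior, or mixed) write $\underline v_\calC := \sup_{\eta_1\in\calC}\inf_{\eta_2\in\calC}\gamma^1(\eta_1,\eta_2;X)$ and $\overline v_\calC := \inf_{\eta_2\in\calC}\sup_{\eta_1\in\calC}\gamma^1(\eta_1,\eta_2;X)$ for the lower and upper values in $\calC$; by definition the game has a value in $\calC$ precisely when $\underline v_\calC=\overline v_\calC$. The goal is to establish this equality for $\calC$ the randomized and the mixed classes.

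First I would prove that $\underline v_\calC$ and $\overline v_\calC$ are independent of $\calC$, which is the class-independence recorded in the remark following Theorem \ref{th:value1}. Fix a strategy $\eta_1$ of Player 1. For any opponent strategy $\eta_2$ of any type, Theorem \ref{theorem:1} supplies a strategy $\eta_2'\in\calC$ with the same detailed distribution, hence equivalent to $\eta_2$. Since equivalence coincides with game equivalence (the preceding theorem, in its Player 2 version), $\eta_2$ and $\eta_2'$ are game equivalent, and Theorem \ref{th:5.1} (again in its Player 2 version) gives $\gamma^1(\eta_1,\eta_2;X)=\gamma^1(\eta_1,\eta_2';X)$ for every game $X$. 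Consequently $\inf_{\eta_2\in\calC}\gamma^1(\eta_1,\eta_2;X)=\inf_{\eta_2}\gamma^1(\eta_1,\eta_2;X)$, the latter infimum being over all types: the inequality $\le$ is automatic since $\calC$ is contained in the set of all types, and the reverse holds because each $\eta_2$ of any type is matched by an equivalent $\eta_2'\in\calC$ with equal payoff. The symmetric replacement on Player 1's side shows that equivalent strategies $\eta_1,\eta_1'$ satisfy $\gamma^1(\eta_1,\eta_2;X)=\gamma^1(\eta_1',\eta_2;X)$ for every $\eta_2$, so the outer supremum over $\eta_1\in\calC$ agrees with the outer supremum over all types. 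Combining the two steps yields $\underline v_\calC=\underline v$ and, symmetrically, $\overline v_\calC=\overline v$, where $\underline v$ and $\overline v$ denote the lower and upper values computed over all random stopping times of all three types.

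Finally I would invoke Rosenberg, Solan, and Vieille (2001): the game has a value in behavior stopping times, that is $\underline v_{\mathrm{beh}}=\overline v_{\mathrm{beh}}$. By the class-independence just established, $\underline v=\underline v_{\mathrm{beh}}=\overline v_{\mathrm{beh}}=\overline v$, and therefore $\underline v_\calC=\overline v_\calC$ for the randomized class and for the mixed class as well; each thus has a value, equal to the behavior value. The only place where genuine work is hidden is the interchange of the infimum and supremum between classes: it rests entirely on Theorem \ref{theorem:1} (each type admits an equivalent of any prescribed other type) together with the identification of equivalence with game equivalence, which guarantees that replacing a player's strategy by a same-detailed-distribution strategy of another type leaves all payoffs unchanged. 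Once this is granted the argument is purely order-theoretic and requires no attainment of the suprema or infima, so no $\ep$-optimal selections are needed.
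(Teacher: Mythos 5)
Your proof is correct and follows essentially the same route as the paper: establish that the lower and upper values of the game do not depend on the class of random stopping times, then import the existence of the value in behavior stopping times from Rosenberg, Solan, and Vieille (2001). The only difference is that you justify the class-independence by direct strategy-by-strategy substitution inside the sup-inf and inf-sup (using Theorem \ref{theorem:1}, the identification of equivalence with game equivalence, and Theorem \ref{th:5.1}), whereas the paper routes it through the $\ep$-optimality statement of Theorem \ref{th:value1}; this is a cosmetic variation, and your version has the minor advantage of not needing $\ep$-optimal selections.
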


We now turn to two-player nonzero-sum games.
Given $\ep \geq 0$,
a pair $(\rho^*_1,\rho^*_2)$ of randomized stopping times is an \emph{$\ep$-equilibrium} if for every
other pair of randomized stopping times $(\rho_1,\rho_2)$ we have
\[ \gamma^1(\rho^*_1,\rho^*_2;X) \geq \gamma^1(\rho_1,\rho^*_2;X) - \ep,
\ \ \ \ \
\gamma^1(\rho^*_1,\rho^*_2;X) \geq \gamma^1(\rho^*_1,\rho_2;X) - \ep. \]
$\ep$-equilibria in behavior stopping times and in mixed stopping times are defined analogously.
Analogously to Theorems \ref{th:value1} and \ref{th:value2} we have the following two results.

\begin{theorem}
If $(\rho^*_1,\rho^*_2)$ is an $\ep$-equilibrium in randomized stopping times, then
for every pair of random stopping times $(\eta_1,\eta_2)$ we have
\[ \gamma^1(\rho^*_1,\rho^*_2) \geq \gamma^1(\eta_1,\rho^*_2;X) - \ep,
\ \ \ \ \
\gamma^1(\rho^*_1,\rho^*_2;X) \geq \gamma^1(\rho^*_1,\eta_2;X) - \ep. \]
\end{theorem}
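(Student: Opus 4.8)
The plan is to reduce the assertion about arbitrary random stopping times to the hypothesis, which only concerns randomized stopping times, by invoking the equivalence machinery developed earlier. The key mechanism is that replacing a deviation of arbitrary type by an equivalent \emph{randomized} deviation does not change Player~1's payoff as long as the opponent's stopping time is held fixed; this is precisely the combination of game equivalence with Theorem~\ref{th:5.1}.

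First I would treat the first inequality. Fix an arbitrary random stopping time $\eta_1$ of Player~1. By Theorem~\ref{theorem:1} there is a randomized stopping time $\rho_1$ equivalent to $\eta_1$. The coincidence of equivalence and game equivalence (the main result of Section~\ref{games}) shows that $\eta_1$ and $\rho_1$ are game equivalent, and hence, by Theorem~\ref{th:5.1}, $\gamma^1(\eta_1,\eta_2;X)=\gamma^1(\rho_1,\eta_2;X)$ for every stopping game and every $\eta_2$. Specializing to the fixed opponent $\rho^*_2$ gives $\gamma^1(\eta_1,\rho^*_2;X)=\gamma^1(\rho_1,\rho^*_2;X)$. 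Since $\rho_1$ is randomized and $(\rho^*_1,\rho^*_2)$ is an $\ep$-equilibrium in randomized stopping times, the equilibrium condition yields $\gamma^1(\rho^*_1,\rho^*_2;X)\geq\gamma^1(\rho_1,\rho^*_2;X)-\ep$; chaining the two relations delivers $\gamma^1(\rho^*_1,\rho^*_2;X)\geq\gamma^1(\eta_1,\rho^*_2;X)-\ep$.

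The second inequality is obtained by the mirror-image argument, interchanging the two players. Given an arbitrary $\eta_2$, I would choose an equivalent randomized $\rho_2$ via Theorem~\ref{theorem:1}, invoke the Player-$2$ analogues of the coincidence theorem and of Theorem~\ref{th:5.1} to obtain $\gamma^1(\rho^*_1,\eta_2;X)=\gamma^1(\rho^*_1,\rho_2;X)$, and then apply the equilibrium inequality $\gamma^1(\rho^*_1,\rho^*_2;X)\geq\gamma^1(\rho^*_1,\rho_2;X)-\ep$ to conclude.

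The main obstacle is bookkeeping rather than genuine mathematics: Theorem~\ref{theorem:1} is symmetric in the players, but the identification of equivalence with game equivalence and Theorem~\ref{th:5.1} are stated for deviations of Player~1 only, whereas the second inequality concerns a deviation of Player~2. I would therefore need to record explicitly that these results hold verbatim for Player~2, which follows from the symmetry of the construction of the detailed distribution $\prob_{(\eta_1,\eta_2)}$ in the two players. Once this symmetry is noted, both inequalities reduce to the same short computation.
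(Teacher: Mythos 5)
Your argument is correct and is exactly the intended one: the paper states this theorem without a separate proof, presenting it as a direct consequence of Theorem~\ref{theorem:1} (existence of an equivalent randomized stopping time for any random stopping time), the coincidence of equivalence with game equivalence, and Theorem~\ref{th:5.1}, which is precisely the chain you assemble. Your remark about recording the Player-2 versions of the latter two results is the only bookkeeping the paper also glosses over, and it is handled correctly by the symmetry you cite.
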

An analog result holds for $\ep$-equilibria in behavior stopping times and for $\ep$-equilibria in mixed stopping times.

\begin{theorem}
\label{th:13}
If $(\rho^*_1,\rho^*_2)$ is an $\ep$-equilibrium in randomized stopping times,
and if for each $j \in \{1,2\}$ the behavior stopping time $\beta^*_j$ is equivalent to $\rho^*_j$,
then $(\beta^*_1,\beta^*_2)$ is an $\ep$-equilibrium in behavior stopping times.
\end{theorem}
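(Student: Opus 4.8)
The plan is to reduce the statement about behavior stopping times to the $\ep$-equilibrium already assumed for $(\rho^*_1,\rho^*_2)$, exploiting that equivalent stopping times are interchangeable both as a player's own choice and as the opponent's choice. The engine is payoff invariance: since $\beta^*_j$ is equivalent to $\rho^*_j$, the main result of this section (that game equivalence coincides with equivalence) shows that $\beta^*_j$ and $\rho^*_j$ are game equivalent, and then Theorem~\ref{th:5.1} guarantees that, in \emph{every} two-player stopping game and against \emph{every} random stopping time of the opponent, swapping $\rho^*_j$ for $\beta^*_j$ changes neither player's payoff.

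First I would apply this invariance in each coordinate separately at the profile $(\rho^*_1,\rho^*_2)$ to obtain $\gamma^j(\beta^*_1,\beta^*_2;X) = \gamma^j(\rho^*_1,\rho^*_2;X)$ for $j \in \{1,2\}$ and every game $X$: game equivalence of $\beta^*_1$ and $\rho^*_1$ (opponent fixed at $\beta^*_2$) gives the first swap, and game equivalence of $\beta^*_2$ and $\rho^*_2$ (opponent fixed at $\rho^*_1$) gives the second. This identifies the candidate equilibrium payoff with the known one. Next I would fix an arbitrary behavior deviation $\beta_1$ of Player~1 and apply invariance in the second coordinate, via game equivalence of $\beta^*_2$ and $\rho^*_2$, to get $\gamma^1(\beta_1,\beta^*_2;X) = \gamma^1(\beta_1,\rho^*_2;X)$.

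The crucial input is then the theorem immediately preceding this one, which extends the randomized $\ep$-equilibrium property of $(\rho^*_1,\rho^*_2)$ to deviations by \emph{arbitrary} random stopping times; since $\beta_1$ is in particular a random stopping time, it yields $\gamma^1(\rho^*_1,\rho^*_2;X) \geq \gamma^1(\beta_1,\rho^*_2;X) - \ep$. Chaining the two equalities around this inequality gives $\gamma^1(\beta^*_1,\beta^*_2;X) \geq \gamma^1(\beta_1,\beta^*_2;X) - \ep$, and the symmetric argument, interchanging the roles of the two players, handles deviations $\beta_2$ by Player~2, establishing that $(\beta^*_1,\beta^*_2)$ is an $\ep$-equilibrium in behavior stopping times.

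I expect no deep obstacle: the argument is a clean reduction. The points demanding care are to apply game equivalence in each coordinate \emph{separately} and in an \emph{arbitrary} game $X$, so that the payoff invariance genuinely transfers both players' payoffs at both the equilibrium profile and the deviation profile; and to invoke robustness of the randomized equilibrium against \emph{all} random deviations rather than merely randomized ones---since the deviation $\beta_1$ need not itself be a randomized stopping time, the bare definition of a randomized $\ep$-equilibrium would not suffice.
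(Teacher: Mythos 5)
Your argument is correct. The paper states this theorem without proof (it is offered only as an analogue of Theorem~\ref{th:value2}), and your reduction --- swapping $\rho^*_j$ for the game-equivalent $\beta^*_j$ coordinate by coordinate via Theorem~\ref{th:5.1}, then invoking the immediately preceding theorem so that the randomized $\ep$-equilibrium controls the \emph{behavior} deviation $\beta_1$ --- is exactly the intended chain of substitutions, and you rightly flag that the bare definition of a randomized $\ep$-equilibrium would not by itself cover a deviation that is not a randomized stopping time.
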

There are five analog theorems to Theorem \ref{th:13},
depending on the type of strategies in the given $\ep$-equilibrium
and on the type of equivalent strategies.

Shmaya and Solan (2004) proved that every two-player stopping game has an $\ep$-equilibrium in behavior stopping times.
The equivalence of the three types of random stopping times gives us the following.
\begin{theorem}
Every two-player nonzero-sum stopping game has an equilibrium in randomized stopping times
and an equilibrium in  mixed stopping times.
\end{theorem}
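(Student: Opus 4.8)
The plan is to obtain the result as a direct corollary of the Shmaya--Solan (2004) existence theorem together with the equivalence machinery developed above. Fix a two-player nonzero-sum stopping game $X$ and a number $\ep>0$. By Shmaya and Solan (2004), the game admits an $\ep$-equilibrium $(\beta^*_1,\beta^*_2)$ in behavior stopping times. The goal is to transport this $\ep$-equilibrium from the behavior class to the randomized class and to the mixed class, and then to let $\ep$ range over all positive reals so as to recover ``an equilibrium'' in the sense in which the behavior class possesses one.

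First I would invoke Theorem \ref{theorem:1}: since each $\beta^*_j$ is a behavior stopping time, it has an equivalent randomized stopping time $\rho^*_j$ and an equivalent mixed stopping time $\mu^*_j$, for $j=1,2$. Next I would apply the analog of Theorem \ref{th:13} in the direction \emph{behavior} $\,\to\,$ \emph{randomized}: because $(\beta^*_1,\beta^*_2)$ is a behavior $\ep$-equilibrium and each $\rho^*_j$ is equivalent to $\beta^*_j$, the pair $(\rho^*_1,\rho^*_2)$ is a randomized $\ep$-equilibrium. Applying instead the \emph{behavior} $\,\to\,$ \emph{mixed} analog of Theorem \ref{th:13} yields that $(\mu^*_1,\mu^*_2)$ is a mixed $\ep$-equilibrium. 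Since $\ep>0$ was arbitrary, the game has an $\ep$-equilibrium in randomized stopping times and an $\ep$-equilibrium in mixed stopping times for every $\ep$, which is the asserted conclusion.

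This argument is essentially a bookkeeping exercise, so its content lies entirely in the results it cites rather than in any new estimate; the one point that requires care is applying the analogs of Theorem \ref{th:13} in the correct directions and replacing \emph{both} players' strategies simultaneously by equivalent ones of the target type. The substance behind those analogs --- which I would simply cite --- is the fact established earlier in this section that single-player equivalence implies game equivalence: replacing $\beta^*_j$ by the equivalent $\rho^*_j$ (resp.\ $\mu^*_j$) leaves unchanged the joint detailed distribution $\prob_{(\cdot,\cdot)}$ of the profile against any fixed strategy of the opponent, and hence leaves every equilibrium payoff and every deviation payoff unchanged. Thus no genuine obstacle arises, and the theorem follows by combining the behavior-class existence result with the type-conversion corollaries.
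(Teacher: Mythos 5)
Your proposal is correct and matches the paper's intended argument exactly: the paper derives this theorem, without further proof, from the Shmaya--Solan (2004) existence of behavior $\ep$-equilibria combined with Theorem \ref{theorem:1} and the (behavior $\to$ randomized, behavior $\to$ mixed) analogs of Theorem \ref{th:13}, which is precisely the chain you describe. Your remark that ``equilibrium'' here must be read as ``$\ep$-equilibrium for every $\ep>0$,'' inherited from the behavior-class result, is also the right reading of the statement.
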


\end{document}